\begin{document}

\newtheorem{thm}{Theorem}
\newtheorem{lem}[thm]{Lemma}
\newtheorem{claim}[thm]{Claim}
\newtheorem{cor}[thm]{Corollary}
\newtheorem{prop}[thm]{Proposition} 
\newtheorem{definition}[thm]{Definition}
\newtheorem{question}[thm]{Open Question}
\newtheorem{conj}[thm]{Conjecture}
\newtheorem{rem}[thm]{Remark}
\newtheorem{prob}{Problem}

\newtheorem{ass}[thm]{Assumption}

\newtheorem{lemma}[thm]{Lemma}

\newcommand{\GL}{\operatorname{GL}}
\newcommand{\SL}{\operatorname{SL}}
\newcommand{\lcm}{\operatorname{lcm}}
\newcommand{\ord}{\operatorname{ord}}
\newcommand{\Tr}{\operatorname{Tr}}
\newcommand{\Span}{\operatorname{Span}}

\numberwithin{equation}{section}
\numberwithin{thm}{section}
\numberwithin{table}{section}

\def\vol {{\mathrm{vol\,}}}
\def\squareforqed{\hbox{\rlap{$\sqcap$}$\sqcup$}}
\def\qed{\ifmmode\squareforqed\else{\unskip\nobreak\hfil
\penalty50\hskip1em\null\nobreak\hfil\squareforqed
\parfillskip=0pt\finalhyphendemerits=0\endgraf}\fi}

\def \balpha{\bm{\alpha}}
\def \bbeta{\bm{\beta}}
\def \bgamma{\bm{\gamma}}
\def \blambda{\bm{\lambda}}
\def \bchi{\bm{\chi}}
\def \bphi{\bm{\varphi}}
\def \bpsi{\bm{\psi}}
\def \bomega{\bm{\omega}}
\def \btheta{\bm{\vartheta}}
\def \bmu{\bm{\mu}}
\def \bnu{\bm{\nu}}

\newcommand{\bfxi}{{\boldsymbol{\xi}}}
\newcommand{\bfrho}{{\boldsymbol{\rho}}}

\def\cA{{\mathcal A}}
\def\cB{{\mathcal B}}
\def\cC{{\mathcal C}}
\def\cD{{\mathcal D}}
\def\cE{{\mathcal E}}
\def\cF{{\mathcal F}}
\def\cG{{\mathcal G}}
\def\cH{{\mathcal H}}
\def\cI{{\mathcal I}}
\def\cJ{{\mathcal J}}
\def\cK{{\mathcal K}}
\def\cL{{\mathcal L}}
\def\cM{{\mathcal M}}
\def\cN{{\mathcal N}}
\def\cO{{\mathcal O}}
\def\cP{{\mathcal P}}
\def\cQ{{\mathcal Q}}
\def\cR{{\mathcal R}}
\def\cS{{\mathcal S}}
\def\cT{{\mathcal T}}
\def\cU{{\mathcal U}}
\def\cV{{\mathcal V}}
\def\cW{{\mathcal W}}
\def\cX{{\mathcal X}}
\def\cY{{\mathcal Y}}
\def\cZ{{\mathcal Z}}
\def\Ker{{\mathrm{Ker}}}

\def\sA{{\mathscr A}}

\def\sssum{\mathop{\sum\!\sum\!\sum}}
\def\ssum{\mathop{\sum\ldots \sum}}
\def\dsum{\mathop{\quad \sum \qquad \sum}}

\newcommand{\abs}[1]{\left| #1 \right|}
\newcommand{\norm}[1]{\left\| #1 \right\|}

\def\Xm{\cX_m}

\def \A {{\mathbb A}}
\def \B {{\mathbb A}}
\def \C {{\mathbb C}}
\def \F {{\mathbb F}}
\def \G {{\mathbb G}}
\def \L {{\mathbb L}}
\def \K {{\mathbb K}}
\def \Q {{\mathbb Q}}
\def \R {{\mathbb R}}
\def \Z {{\mathbb Z}}
\def \P {{\mathbb P}}

\def \vx {\mathbf x}
\def \vy {\mathbf y}
\def \vz {\mathbf z}
\def \vH {\mathbf H}
\def \vU {\mathbf U}
\def \vX {\mathbf X}
\def \vY {\mathbf Y}
\def \vZ {\mathbf Z}

\def \fA{\mathfrak A}
\def \fC{\mathfrak C}
\def \fL{\mathfrak L}
\def \fR{\mathfrak R}
\def \fS{\mathfrak S}

\def \fUg{{\mathfrak U}_{\mathrm{good}}}
\def \fUm{{\mathfrak U}_{\mathrm{med}}}
\def \fV{{\mathfrak V}}
\def \fG{\mathfrak G}
\def \f{\mathfrak G}

\def\e{{\mathbf{\,e}}}
\def\ep{{\mathbf{\,e}}_p}
\def\eq{{\mathbf{\,e}}_q}

 \def\\{\cr}
\def\({\left(}
\def\){\right)}
\def\fl#1{\left\lfloor#1\right\rfloor}
\def\rf#1{\left\lceil#1\right\rceil}

\def\Im{{\mathrm{Im}}}

\def \oF {\overline \F}

\def\vm{\vec{m}}

\newcommand{\pfrac}[2]{{\left(\frac{#1}{#2}\right)}}

\def \Prob{{\mathrm {}}}
\def\e{\mathbf{e}}
\def\ep{{\mathbf{\,e}}_p}
\def\epp{{\mathbf{\,e}}_{p^2}}
\def\edd{{\mathbf{\,e}}_{d^2}}
\def\em{{\mathbf{\,e}}_m}

\def\Res{\mathrm{Res}}
\def\Orb{\mathrm{Orb}}

\newcommand{\rank}{\operatorname{rk}}

\def\vec#1{\mathbf{#1}}
\def \va{\vec{a}}
\def \vb{\vec{b}}
\def \vc{\vec{c}}
\def \vs{\vec{s}}
\def \vu{\vec{u}}
\def \vv{\vec{v}}
\def \vw{\vec{w}}
\def\vlam{\vec{\lambda}}
\def\flp#1{{\left\langle#1\right\rangle}_p}

\def\sE {\mathscr {E}}
\def\sM {\mathscr {M}}

\def\mand{\qquad\mbox{and}\qquad}

\title[Matrices with square-free determinants]
{Counting integer matrices with square-free determinants}

\author[A. Ostafe] {Alina Ostafe}
\address{School of Mathematics and Statistics, University of New South Wales, Sydney NSW 2052, Australia}
\email{alina.ostafe@unsw.edu.au}

\author[I. E. Shparlinski] {Igor E. Shparlinski}
\address{School of Mathematics and Statistics, University of New South Wales, Sydney NSW 2052, Australia}
\email{igor.shparlinski@unsw.edu.au}

\begin{abstract}  We consider the set  $\cM_n\(\Z; H\)$ of $n\times n$-matrices with 
integer elements of size at most $H$ and obtain an asymptotic formula for the number of
matrices from $\cM_n\(\Z; H\)$ with square-free determinants.
We also use our approach with some further enhancements, to obtain an asymptotic formula
for the sums of the Euler function with determinants of matrices from  $\cM_n\(\Z; H\)$.
 \end{abstract}

\subjclass[2020]{11C20, 11L07, 15B36, 15B52}

\keywords{Matrices, determinants, square-freeness}

\maketitle

\tableofcontents

\section{Introduction}
\subsection{Motivation results}
For a positive integer $n$, we use $\cM_n\(\Z\)$  to denote the
set of all  $n\times n$ matrices with integer elements. 
Furthermore, for  an  integer $H\ge 1$ we use $\cM_n\(\Z; H\)$  to denote the
set  of matrices
\[
A = \(a_{ij}\)_{i,j=1}^n \in \cM_n\(\Z\)
\]  
with integer entries of size $|a_{ij}| \le H$.  In particular, $\cM_n\(\Z; H\)$ is 
of cardinality $\# \cM_n\(\Z; H\) = \(2H +1\)^{n^2}$. 

Recently there has been quite active interest in investigating arithmetic properties of matrix determinants and minors, see~\cite{KoWo,Kov}.  For example,  Kotsovolis 
and Woo~\cite{KoWo} obtained an asymptotic formula for the number of matrices (counted 
by an slightly different norm) with prime determinants. The methods of homogeneous dynamics, underlying the approach of~\cite{KoWo}, do not allow to get strong bounds on 
the error terms of this asymptotic formula.  Motivated by this work, and also by work of 
Kovaleva~\cite{Kov} on square-free minors, we obtain an asymptotic formula for 
the number $S_n(H)$ 
of matrices $A \in \cM_n\(\Z; H\)$ with a 
square-free determinant, that is, matrices for which $\det A$ is not divisible by 
a square of any prime. While investigating square-free values is typically easier than 
investigating prime values, our goal is to get strong bounds on the error term of our asymptotic formula. 


We also remark that treating the determinant as a generic polynomial over $\Z$ in $n^2$ variables and of 
degree $n$, is not going to bring any results as at the present time all results on square-free values of 
polynomials of high degree are conditional on the celebrated $abc$-conjecture, see~\cite{Poon,Reus}, 
or require an exponentially large (compared to the degree) number of variables~\cite{DeSo}.

It is natural to ask whether our results extend to arbitrary multilinear polynomials. Unfortunately the answer is negative as our arguments rely on the special structure 
of determinants, for example, on its invariance with respect to 
standard row operations used in the proof of Lemma~\ref{lem: LinForm=Det=0},
and homogeneity with respect to the variables from the same row,
used in the proof of Lemma~\ref{lem: SpL}. Furthermore, there is no analogue of  Lemma~\ref{lem: Det} for arbitrary multivariate polynomials.

\subsection{Main results}
Our asymptotic formula gives a power saving in the error term, exceeding $1/2$, 
which is the limit of 
approaches based  on the use of the Weil bound. 

In fact, using the standard inclusion-exclusion principle together with  Lemmas~\ref{lem: Det = 0 Mod d^2}  
and~\ref{lem: Det} below one can already get 
a power saving in the error term. However we go beyond this and get a stronger 
bound. 

\begin{thm}
\label{thm:SF-Det}
We have 
\[
S_n(H) = 2^{n^2} \fS_n H^{n^2} + O\(H^{n^2 - \gamma_n}\),
\]
where 
\[ 
\fS_n =  \prod_{p~ \text{prime}} \, \prod_{j = 2}^{n+1} \(1-p^{-j}\) 
\]
and 
\[
\gamma_n  = \frac{1}{2} +
\frac{n-1}{2\(n^3 +3n^2 -n+1\)}.
\]
\end{thm}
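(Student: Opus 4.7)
The plan is to apply Möbius inversion through the identity $\mu^2(m) = \sum_{d^2 \mid m} \mu(d)$, which gives
\[
S_n(H) = \sum_{d \le D_0} \mu(d)\, N_d(H), \qquad N_d(H) := \#\{A \in \cM_n(\Z;H) : d^2 \mid \det A\},
\]
where $D_0 = \lfloor \sqrt{n!}\, H^{n/2} \rfloor$ since $|\det A| \le n!H^n$. I would split this sum at an intermediate cutoff $D = H^{\theta}$, to be optimised at the end, and handle the small-modulus and large-modulus pieces by two qualitatively different tools.

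For $d \le D$, I would apply Lemma~\ref{lem: Det = 0 Mod d^2} to write $N_d(H)$ as a main term proportional to the local density of matrices modulo $d^2$ whose determinant is divisible by $d^2$, plus an error term. Summing the main term against $\mu(d)$ over all $d$ (and extending the sum back to infinity at a negligible tail cost) yields the Euler product. The required local identity, namely that the density of matrices in $M_n(\Z/p^2\Z)$ with $p^2 \mid \det$ equals $1 - \prod_{j=2}^{n+1}(1-p^{-j})$, is a direct finite computation; multiplicativity then assembles the singular series into the stated expression for $\fS_n$.

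For $d > D$, a direct use of Lemma~\ref{lem: Det = 0 Mod d^2} saves only $1/2$ in the exponent of $H$, which is the Weil-type barrier mentioned in the introduction. To cross it, I would instead sort matrices by the value of their determinant,
\[
N_d(H) \le \#\{A \in \cM_n(\Z;H): \det A = 0\} + \sum_{\substack{0<|k| \le n!H^n \\ d^2 \mid k}} T_n(k, H),
\]
where $T_n(k,H) := \#\{A \in \cM_n(\Z;H) : \det A = k\}$, and invoke the uniform bound on $T_n(k, H)$ provided by Lemma~\ref{lem: Det} (with the $k = 0$ contribution of smaller order by codimension). Writing $k = d^2 \ell$ contracts the length of the outer sum by a factor of $d^2$, and this extra contraction, not visible from the modular estimate of Lemma~\ref{lem: Det = 0 Mod d^2}, is what produces a saving beyond $1/2$.

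The principal obstacle is then to balance the two regimes cleanly: the small-modulus error grows as a power of $D$, while the large-modulus error decreases as a power of $D$, and the precise exponents delivered by Lemmas~\ref{lem: Det = 0 Mod d^2} and~\ref{lem: Det} must be tracked with care (the $n$-dependence of the error in Lemma~\ref{lem: Det}, in particular, is what eventually determines $\theta$). Equating the two contributions fixes the optimal $\theta$, and it is the resulting rational function of $n$ that produces the denominator $n^3+3n^2-n+1$ in the stated value of $\gamma_n$.
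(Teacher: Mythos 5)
Your outer skeleton matches the paper (Möbius expansion $S_n(H)=\sum_{d}\mu(d)N_n(d^2;H)$, a cutoff parameter, Lemma~\ref{lem: Det = 0 Mod d^2} for the singular series, Lemma~\ref{lem: Det} for the tail), but there is a genuine gap in the range $d\le D$, and it is precisely the gap the paper's main new machinery exists to fill. Lemma~\ref{lem: Det = 0 Mod d^2} counts matrices over a \emph{complete} residue system modulo $d^2$; to pass to matrices with entries in the box $[-H,H]$ you must count solutions of $\det A\equiv 0 \bmod d^2$ with entries in incomplete intervals. The elementary way (tiling the box by residue classes mod $d^2$) gives an error of order $H^{n^2-1}$ for each $d$ and breaks down completely once $d^2>H$, so the cutoff is forced to satisfy $D\le H^{1/2}$; combined with the tail bound $H^{n^2+o(1)}/D$ from Lemma~\ref{lem: Det}, this caps the saving at $1/2$ — this is exactly the "elementary argument" the paper says yields only $\gamma_n<1/2$ and deliberately goes beyond. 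To prove the stated $\gamma_n=\tfrac12+\tfrac{n-1}{2(n^3+3n^2-n+1)}>\tfrac12$ one needs, for square-free $d$, an asymptotic for $N_n(d^2;H)$ whose error term is a power of $d$ alone and strictly smaller than $d^{2n^2-2}$; the paper obtains $O\bigl(d^{2n^2-4(n+1)/(n+3)+o(1)}\bigr)$ in Lemma~\ref{lem: Nd2H} via the Koksma--Sz\"usz inequality (Lemma~\ref{lem:K-S}) fed with new bounds for exponential sums along the determinant variety modulo $p$ and $p^2$ (Lemmas~\ref{lem: SpL}, \ref{lem: SpL1}, \ref{lem: Sp2L}, assembled by CRT in Corollary~\ref{cor: Sd2 bound}). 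Nothing in your proposal supplies a substitute for this step, so the two error curves you propose to balance cannot produce an exponent exceeding $1/2$.

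A secondary but telling misdiagnosis: you attribute the saving beyond $1/2$ to writing $k=d^2\ell$ in the large-modulus range. That contraction is exactly the paper's tail estimate (it gives $N_n(d^2;H)\le H^{n^2+o(1)}/d^2$ and hence $H^{n^2+o(1)}/D$ after summation), and it is also available to the elementary argument; it is not where the gain comes from. Likewise, the denominator $n^3+3n^2-n+1$ does not arise from "tracking the $n$-dependence of Lemma~\ref{lem: Det}" (whose $H^{n^2-n}\log H$ washes out against the $\ll H^n/d^2$ admissible determinant values), but from balancing $H^{n^2}\Delta^{-1}$ against $\Delta^{2n^2-(3n+1)/(n+3)}$, where the exponent $(3n+1)/(n+3)$ is inherited from the exponential sum bound $S_{p^2}(L)\ll p^{2n^2-(n+3)/2}$. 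As written, your argument would at best reprove the weaker, sub-$1/2$ version of the theorem.
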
  

 Note that 
\[
\fS_n =  \prod_{j=2}^{n+1} \zeta(j)^{-1}, 
\] 
where $\zeta(s)$ is the Riemann zeta-function.

Our approach is based on new bounds on exponential sums along determinant hypersurfaces, which 
we believe could be of independent interest. 

\begin{rem}\rm{
Note that the bounds based on  algebraic geometry, 
such as in~\cite{Fouv, Katz}, 
do not apply to rational exponential  sums modulo $p^2$, where $p$ is prime, which is our principal case.
Moreover, even in the case of   exponential sums  modulo $p$, our 
bounds are  stronger that those which can be derived 
from, for example, results of Katz~\cite{Katz-SingSum}. This is due to the high dimension of the singularity locus 
of the determinant variety over $\F_p$ (which is  the set of matrices of $n \times n$ matrices over $\F_p$
of rank at most $n-2$, and thus a variety of dimension $n^2-4$, see~\cite{Ab}), affecting very adversely} the strength of the algebraic geometry approach. 
\end{rem}   

\begin{rem}\rm{
It is also interesting to note  that Browning,  Sawin and Wang~\cite{BSW}
have also used bounds on  exponential sums with matrices over $\F_p$ to establish 
some new results on matrices with integer entries. However the exponential sums which 
arise in~\cite{BSW} and the techniques employed are very different from ours.} 
\end{rem} 

We emphasise that the main feature of Theorem~\ref{thm:SF-Det} is that $\gamma_n > 1/2$
since with 
\begin{equation}
\label{eq:weak gamma}
\gamma_n = n^2/(2n^2+2) < 1/2
\end{equation}
 it can be derived via the aforementioned elementary argument, 
avoiding the use of exponential sums.

Next, to illustrate other  possible applications of our results, in particular,  to obtaining asymptotic formulas for certain sums with multiplicative functions with determinants, we consider the following sum 
\[
\varPhi_n(H) = \sum_{\substack{A \in \cM_n\(\Z; H\)\\\det A \ne 0}}\frac{\varphi\(|\det A|\)}{|\det A|},
\] 
where  $\varphi(k)$ is the Euler function. Our results certainly apply to sums of many other multiplicative functions.

In fact in this case we are able to introduce further  enhancements to the argument underlying the proof 
of Theorem~\ref{thm:SF-Det} and obtain the following result.

\begin{thm}
\label{thm:Mob-Det}
We have 
\[
\varPhi_n(H) =  2^{n^2} \sigma_n H^{n^2} + O\(H^{n^2 -\vartheta_n + o(1)}\),
\]
where 
\[ 
\sigma_n =   \prod_{p~ \text{prime}}\,  \(1- \frac{1}{p}\) \(1+  \frac{1}{p} \prod_{j = 2}^{n} \(1-p^{-j}\) \)
\]
and 
\[
\vartheta_n  = 1 -  \frac{1}{ n^3 +1}.
\]
\end{thm}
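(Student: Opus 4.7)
The plan is to apply Möbius inversion via $\varphi(k)/k = \sum_{d\mid k} \mu(d)/d$ and swap the order of summation, obtaining
\[
\varPhi_n(H) = \sum_{d=1}^{n!H^n} \frac{\mu(d)}{d}\,M_d(H),
\]
where $M_d(H) = \#\{A\in\cM_n(\Z;H) : d\mid\det A,\ \det A\neq 0\}$. Writing $M_d(H) = N_d(H) - N^0(H)$ with $N_d(H) = \#\{A : d\mid\det A\}$ and $N^0(H) = \#\{A : \det A = 0\}$, the $N^0(H)$-contribution is harmless since $N^0(H) = O(H^{n^2-1})$ by a standard lattice-point estimate and $\sum_{d\leq n!H^n}\mu(d)/d = O(1/\log H)$.

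For the dominant sum $\sum_d \mu(d) N_d(H)/d$ I would fix a truncation parameter $D$ and split the range. For $d\le D$, Lemma~\ref{lem: Det} combined with a standard box-to-residue count gives
\[
N_d(H) = (2H+1)^{n^2}\,\frac{\rho_n(d)}{d^{n^2}} + E_d(H),
\]
where $\rho_n(d) = \#\{A\in\cM_n(\Z/d\Z) : \det A \equiv 0 \pmod d\}$, and the error $E_d(H)$ reduces to the exponential sum $\sum_A \e_d(k\det A)$ over non-trivial $k$. Since $\rho_n(d)/d^{n^2}$ is multiplicative, the main-term sum extended to all $d$ factorises as an Euler product, and the identity
\[
\left(1-\frac{1}{p}\right)\left(1+\frac{1}{p}\prod_{j=2}^n(1-p^{-j})\right) = 1 - \frac{1}{p}\left(1-\prod_{j=1}^n(1-p^{-j})\right)
\]
identifies this Euler product with $\sigma_n$, giving the claimed main term $2^{n^2}\sigma_n H^{n^2}$.

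The remaining work is bounding the tail $\sum_{d>D}|\mu(d)| N_d(H)/d$ together with the accumulated $d\le D$ error, and optimising $D$ to reach $\vartheta_n = 1 - 1/(n^3+1)$. This is precisely where the ``further enhancements'' alluded to in the introduction must enter: one needs uniform bounds on $|\sum_A \e_d(k\det A)|$ for squarefree $d$ and non-trivial $k$ with strong power saving in $d$, plus a supplementary upper bound on $N_d(H)$ for large $d$ that exploits the constraint $|\det A|\ge d$ (for $\det A \ne 0$). The principal obstacle is producing these bounds in a form that survives multiplicatively over the prime factors of $d$; as the authors stress, the large singular locus of the determinant variety (all matrices of rank $\le n-2$) prevents simple Deligne/Katz-type bounds from working, so a tailor-made exponential sum estimate, together with careful balancing of the two error sources, is essential. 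Once such estimates are in hand, the Möbius bookkeeping, the Euler-product identification of $\sigma_n$, and the handling of $N^0(H)$ are comparatively routine.
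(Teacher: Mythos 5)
Your framework matches the paper's: the identity $\varphi(k)/k=\sum_{d\mid k}\mu(d)/d$, the swap of summation, the split at a threshold, the Euler-product identification of $\sigma_n$ (your per-prime identity is correct and is exactly the computation in the paper, using $N_n(p)/p^{n^2}=1-\prod_{j=1}^n(1-p^{-j})$ from Lemma~\ref{lem: Det = 0 Mod d^2}), and the tail bound for large $d$ via Lemma~\ref{lem: Det}, which gives $N_n(d;H)\ll H^{n^2+o(1)}/d$. Your separate treatment of $\det A=0$ is fine (and slightly more careful than the paper's write-up).

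However, there is a genuine gap: everything that actually produces the exponent $\vartheta_n=1-1/(n^3+1)$ is left as an assertion that suitable bounds ``must enter.'' The paper's proof hinges on a quantitative asymptotic $N_n(d;H)=N_n(d)(2H+1)^{n^2}/d^{n^2}+O\bigl(d^{n^2-2+1/n+o(1)}\bigr)$ uniformly in square-free $d$ (Lemma~\ref{lem: NdH}), obtained via the Koksma--Sz\"usz inequality (Lemma~\ref{lem:K-S}), which reduces the box-counting error not to the sums $\sum_A \e_d(k\det A)$ you propose, but to the sums $S_d(L)=\sum_{\det\vX\equiv 0\bmod d}\em\bigl(L(\vX)\bigr)$ twisted by \emph{linear forms} $L$. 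The decisive ``enhancement'' for this theorem is the distinction between monomial and non-monomial $L$: for monomial forms one has the stronger bound $S_p(L)\ll p^{n^2-n}$ (Lemma~\ref{lem: SpL}), versus $p^{n^2-(n+1)/2}$ in general (Lemma~\ref{lem: SpL1}), and it is the monomial case, multiplied up over $p\mid d$ with control of $\gcd(L,d)$ (Lemma~\ref{lem: Sd bound}), that drives the error $d^{n^2-2+1/n}$ and hence, after balancing $H^{n^2}\Delta^{-1}$ against $\Delta^{n^2-1+1/n}$ with $\Delta=H^{n^3/(n^3+1)}$, the stated $\vartheta_n$. None of this is supplied or even specified in your proposal, and your suggested reduction to complete sums of $\e_d(k\det A)$ would lose exactly the monomial-form structure on which the numerics depend. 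As written, the proposal proves only the (comparatively routine) main-term identification and tail estimate; the power-saving error term, which is the content of the theorem, remains unproved.
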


\begin{rem}\rm{Our proof of Theorem~\ref{thm:Mob-Det} takes advantage of stronger bounds of exponential sums with square-free denominators with so-called
monomial linear forms, see Lemma~\ref{lem: Sd bound} below. 
We have not been able to establish a  bound of similar strength in the case of denominators, 
which are squares of  square-free integers, see, however, Lemma~\ref{lem: Sd2 bound}.}
\end{rem}

\subsection{Notation and conventions}
We recall that  the notations $U = O(V)$, $U \ll V$ and $ V\gg U$  
are equivalent to $|U|\leqslant c V$ for some positive constant $c$, 
which throughout this work, may depend only on $n$. 

We also write $U = V^{o(1)}$ if for any fixed $\varepsilon$ we have 
$V^{-\varepsilon} \le |U |\le V^{\varepsilon}$ provided that $V$ is large 
enough. 

For a finite set $\cS$ we use $\# \cS$ to denote its cardinality. 

For a positive  integer $m$, we use $\Z_m$ to denote the residue ring modulo $m$, which we assume to be represented by the 
set $\{0, \ldots, m-1\}$, and use $\Z_m^*$ for its group of units.

The letter $p$ always denotes a prime number.

We also freely alternate between the language of residue rings $\Z_p$ 
modulo $p$ and finite fields $\F_p$ of $p$ elements. Similarly, we also alternate, 
where convenient, between the language of residue rings $\Z_m$ and congruences modulo $m$.

For an integer $k\ne 0$ we denote by $\tau(k)$ the number of positive integer  divisors of $k$, 
for which we very often use the well-known bound
\begin{equation}
\label{eq:tau}
\tau(k) = |k|^{o(1)} 
\end{equation}
as $|k| \to \infty$, see~\cite[Equation~(1.81)]{IwKow}.

We also use the convention that if $A$ is an $n \times n$ matrix defined over $\Z_m$ or $\F_p$, then by $\det A=0$ we mean $\det A\equiv 0 \bmod m$ or $\det A\equiv 0\bmod p$, respectively, which should be clear from the context. When $A$ has entries in an interval $[-H,H]$, then any condition on the determinant modulo a positive integer $m$ or prime $p$ will be clearly indicated.

\section{Preliminaries}

\subsection{Matrices with  determinants with a divisibility condition}
\label{sec:div det} 
Let $N_n(m)$ denote the number of $n \times n$ matrices $A$  over 
$\Z_m$ for which $\det A=0$.

The following  exact formula is a very special case of a much more general 
result of Brent and McKay~\cite[Corollary~2.2]{BreMcK} combined with the 
classical Chinese Remainder Theorem. 

\begin{lemma}
\label{lem: Det = 0 Mod d^2}
For any square-free integer $d\ge 1$,  we have
\[
N_n(d) = d^{n^2} \prod_{p\mid d} \(1 - \prod_{j = 1}^{n} \(1-p^{-j}\)\)
\]
and
\[
N_n(d^2) = d^{2n^2} \prod_{p\mid d} \(1 - \prod_{j = 2}^{n+1} \(1-p^{-j}\)\).
\]
\end{lemma}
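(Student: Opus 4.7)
The plan is to reduce to prime moduli via the Chinese Remainder Theorem and then verify each formula at $d = p$ by a direct count. Under the ring isomorphism $\Z_{d_1 d_2} \cong \Z_{d_1} \times \Z_{d_2}$ for coprime $d_1, d_2$, a matrix over $\Z_{d_1 d_2}$ has vanishing determinant if and only if both of its component matrices do (the determinant is a polynomial, so it commutes with the product decomposition). Hence $d \mapsto N_n(d)$ and $d \mapsto N_n(d^2)$ are multiplicative on square-free arguments, and it suffices to verify the stated local factors at each prime $p \mid d$.

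For the first formula I would use complementary counting: the matrices over $\F_p$ with nonzero determinant form $\GL_n(\F_p)$, whose order is the classical product $\prod_{j=0}^{n-1}(p^n - p^j) = p^{n^2}\prod_{j=1}^n(1-p^{-j})$, obtained by choosing rows to be successively linearly independent. Subtracting from $p^{n^2}$ gives the claimed local factor $1 - \prod_{j=1}^n(1-p^{-j})$.

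For the second formula I would classify matrices over the local principal ideal ring $\Z_{p^2}$ by Smith normal form: every $A$ is $\GL_n(\Z_{p^2}) \times \GL_n(\Z_{p^2})$-equivalent to a diagonal matrix with entries in $\{1, p, 0\}$, of type $(r, s, t)$ with $r + s + t = n$ (units, $p$'s, zeros respectively). The determinant of such a diagonal matrix is $\equiv 0 \pmod{p^2}$ exactly when $s + 2t \ge 2$, i.e.\ in all types except $(n,0,0)$ and $(n-1,1,0)$. The orbit sizes are computed via $|\mathrm{orbit}| = |\GL_n(\Z_{p^2})|^2 / |\mathrm{Stab}|$, and summing over the relevant types produces the stated local factor $1 - \prod_{j=2}^{n+1}(1-p^{-j})$. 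This is the content of \cite[Corollary~2.2]{BreMcK} specialised to our situation.

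The main obstacle is the orbit–stabiliser calculation for $N_n(p^2)$: the stabiliser of a mixed-type diagonal matrix has a nontrivial block structure (a parabolic-type subgroup together with $p$-torsion contributions linking the unit and non-unit blocks), and the product over the three block sizes must telescope to the clean expression $\prod_{j=2}^{n+1}(1-p^{-j})$. This is where the Brent--McKay framework does the heavy algebraic bookkeeping, so I would invoke their Corollary~2.2 rather than reproduce the full derivation here.
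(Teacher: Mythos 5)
Your proposal is correct and follows essentially the same route as the paper, which simply derives the lemma from Brent--McKay~\cite[Corollary~2.2]{BreMcK} combined with the Chinese Remainder Theorem. Your CRT reduction and the elementary $\GL_n(\F_p)$ count for the modulus $p$ case are fine, and deferring the $p^2$ orbit--stabiliser bookkeeping to the same citation is exactly what the paper does.
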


\subsection{Matrices with fixed determinants}
 
We need a bound on the number of matrices $A \in \cM_n\(\Z;H\)$ with prescribed value 
of the determinant $\det A =d$. We recall that  Duke,  Rudnick and  Sarnak~\cite{DRS},
if $d \ne 0$, and  Katznelson~\cite{Katz}, when $d=0$, have obtained asymptotic formulas 
(with the main terms of orders $H^{n^2 - n}$ and $H^{n^2 - n}\log H$, respectively) 
for the number of  such matrices when $d$ is fixed. However this is too restrictive for our purpose  because we need  a uniform with respect to $d$ results, while in~\cite{DRS} the dependency on $d$ is not specified. Hence we use 
a uniform with respect to $d$ upper bound which is a special case 
of~\cite[Theorem~4]{Shp}.

\begin{lemma}
\label{lem: Det}
Uniformly over $a \in \Z$, there are at most  $O\(H^{n^2 - n}\log H\)$ matrices    $A \in \cM_n\(\Z;H\)$ with 
 $\det A =a$.
\end{lemma}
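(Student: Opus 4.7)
The plan is to split into two cases according to whether $a$ vanishes. When $a = 0$, the quantity to bound is simply the number of singular matrices in $\cM_n\(\Z; H\)$. I would stratify by rank: matrices of rank exactly $r$ contribute at most $O\(H^{r(2n-r)}\)$ for $r\le n-2$ and $O\(H^{n^2 - n} \log H\)$ for $r = n-1$, the latter being the dominant stratum and matching the Katznelson~\cite{Katz} asymptotic.

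When $a \neq 0$, I would proceed by Laplace expansion along the last column. Writing $A = [B \mid \vc]$ with $B$ an $n \times (n-1)$ matrix whose entries lie in $[-H, H]$ and $\vc = (c_1, \ldots, c_n)^T \in [-H,H]^n$, the condition $\det A = a$ becomes the single linear equation
\[
\sum_{i=1}^{n} (-1)^{i+n} \mu_i(B)\, c_i = a,
\]
where $\mu_i(B)$ is the $(n-1)\times(n-1)$ minor of $B$ obtained by deleting row $i$. Since $a \neq 0$, the cofactor vector $(\mu_1(B), \ldots, \mu_n(B))$ cannot vanish, so a standard lattice-point estimate on the resulting affine hyperplane in $[-H,H]^n$ yields that the number of admissible $\vc$ is $O\(H^{n-1}/M(B)\)$, where $M(B) = \max_i |\mu_i(B)|$.

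The decisive step is then to sum this bound over $B$. A dyadic decomposition in $M(B)$, combined with an inductive count of those $B$ whose maximal minor is below a given threshold (which in turn can be controlled by a version of the same lemma applied to $(n-1)\times(n-1)$ submatrices, or equivalently by a rank stratification of $B$), gives the desired estimate $O\(H^{n^2 - n} \log H\)$, with the $\log H$ factor arising from the dyadic summation. The main obstacle is precisely this uniformity in $a$: the asymptotic formula of Duke, Rudnick and Sarnak~\cite{DRS} has implied constants depending on $a$, whereas our application requires dependence only on $n$. The Laplace-plus-dyadic scheme achieves uniformity because, once $B$ is fixed, the number of admissible $\vc$ depends on $a$ only through the solvability of a single linear equation, and the bound $O\(H^{n-1}/M(B)\)$ is insensitive to the particular value of $a$. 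The full argument is carried out in \cite[Theorem~4]{Shp}.
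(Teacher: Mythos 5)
The paper itself gives no proof of this lemma: it is quoted directly as a special case of \cite[Theorem~4]{Shp}, so your closing deferral to that reference is exactly what the authors do, and as a citation your proposal is fine. But the sketch you offer in place of the reference does not stand up as an independent argument, so you should not present it as if the remaining steps were routine.

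Two concrete problems in the $a\neq 0$ part. First, the per-$B$ bound $O(H^{n-1}/M(B))$ for the number of admissible last columns is false: the count depends on the gcd of the cofactors, not just their maximum. Already for $n=3$, take the first two rows of $B$ equal, so the cofactor vector has the shape $\pm(K,-K,0)$ with $K\asymp H^{2}$ attainable; choosing $a=\pm K\neq0$ the equation reduces to $c_1-c_2=1$ and has $\asymp H^{n-1}$ solutions, against your claimed $O(H^{n-1}/M(B))=O(1)$. The correct elementary estimate is roughly $\ll H^{n-1}g(B)/M(B)+H^{n-2}$ with $g(B)=\gcd_i \mu_i(B)$. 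Second, and more seriously, even with this correction the summation over $B$ is not the bookkeeping step you suggest: the secondary term alone contributes $\ll H^{n(n-1)}\cdot H^{n-2}=H^{n^2-2}$, which exceeds the target $H^{n^2-n}$ for every $n\ge 3$. To beat it one must show that for typical $B$ with large cofactors the affine hyperplane misses the box $[-H,H]^n$ entirely, which requires information beyond $M(B)$ and $g(B)$ (in effect, all successive minima of the lattice orthogonal to the cofactor vector, or an equivalent inductive device); ``dyadic decomposition plus induction'' does not by itself supply this, and it is precisely where the content of \cite[Theorem~4]{Shp} lies. A minor further point: in the $a=0$ case your stratum bound $O(H^{r(2n-r)})$ is weaker than $H^{n^2-n}$ for $r=n-2$ once $n\ge5$, so the stratification as stated does not close that case either; it is harmless only because Katznelson's asymptotic \cite{Katz} for all singular matrices already provides the required uniform upper bound.
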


In principle, as we have mentioned, a combination of Lemmas~\ref{lem: Det = 0 Mod d^2}
and~\ref{lem: Det} is already 
enough to prove a version of Theorem~\ref{thm:SF-Det}  with $\gamma_n$ given by~\eqref{eq:weak gamma}.
However,  to get the desired bound on the error term we also need to count 
matrices  $A \in \Z_{d^2}^{n \times n}$  
with entries in incomplete intervals 
in $\Z_{d^2}$ such that $\det A =0$.
  This is done in Section~\ref{sec: ExpDet} via bounds of exponential sums. 

 \section{Exponential sums along determinant congruences}  
 \label{sec: ExpDet}
 
 \subsection{Set-up}
 
 Given a linear form 
\begin{equation}
\label{eq: LinForm}
 L(\vX)  = \sum_{i,j = 1}^n a_{ij} x_{ij}  \in \Z[\vX]
\end{equation}
 in $n^2$ variables $\vX = \(x_{ij}\)_{i,j=1}^n$ and an integer $m$, we define the 
 exponential sums 
 \[
 S_m(L) = \sum_{\det \vX \equiv 0 \bmod m} \em\( L(\vX)\)
 \]
where $\em(u) = \exp(2\pi i u/m)$, 
along solutions to the congruence 
$\det \vX =0$  in $\vX \in \Z_m^{n \times n}$. 

We are only interested in the sums $ S_m(L)$ when  the modulus $m = d$ or $m = d^2$
for a square-free integer $d \ge 1$. Thus we start with reduction to 
sums modulo $p$ or $p^2$. Namely, by the Chinese Remainder Theorem, we have
the following decomposition, see~\cite[Equation~(12.21)]{IwKow}.

\begin{lemma}
\label{lem: Sd2 Sp2}
For any square-free integer $d\ge 1$ and a linear form $ L(\vX)  \in \Z[\vX]$, 
we have
\[
S_{d}(L)=   \prod_{p\mid d} S_{p}(b_{d,p}L) \mand S_{d^2}(L)=   \prod_{p\mid d} S_{p^2}(b_{d,p}L), 
\]
for some integers $b_{d,p}$ with $\gcd(b_{d,p},p) = 1$. 
\end{lemma}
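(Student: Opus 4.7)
The plan is to treat this as a routine Chinese Remainder Theorem (CRT) bookkeeping: the matrix set and the additive character both split across the prime divisors of $d$, and the two factorisations merge cleanly.

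First, I would use the reduction-modulo-$p$ maps to obtain the ring isomorphism $\Z_d^{n\times n} \cong \prod_{p\mid d}\Z_p^{n\times n}$, and likewise $\Z_{d^2}^{n\times n}\cong \prod_{p\mid d}\Z_{p^2}^{n\times n}$, valid because $d$ is square-free. Since the determinant is a polynomial with integer coefficients, it commutes with these reductions, so the single congruence $\det\vX\equiv 0\bmod d$ (resp.\ $\bmod\, d^2$) is equivalent to the system $\det\vX_p\equiv 0\bmod p$ (resp.\ $\bmod\, p^2$) for every $p\mid d$, where $\vX_p$ denotes the reduction of $\vX$.

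Second, I would decompose the additive character via the partial-fraction identity
\[
\frac{1}{d} \equiv \sum_{p\mid d}\frac{b_{d,p}}{p}\pmod 1,
\]
where $b_{d,p}$ is an integer inverse of $d/p$ modulo $p$, so in particular $\gcd(b_{d,p},p)=1$; an entirely analogous identity $1/d^2 \equiv \sum_{p\mid d} b_{d,p}/p^2 \pmod 1$ holds where now $b_{d,p}$ is an inverse of $(d/p)^2$ modulo $p^2$ (still coprime to $p$). Exponentiating gives $\e_d(a)=\prod_{p\mid d}\ep(b_{d,p}a)$ and $\e_{d^2}(a)=\prod_{p\mid d}\e_{p^2}(b_{d,p}a)$. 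Because $L$ has integer coefficients, $L(\vX)\equiv L(\vX_p)$ modulo $p$ and modulo $p^2$, so each factor depends only on $\vX_p$.

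Finally, combining the two factorisations and distributing the product over the product of sums yields exactly the two identities of the lemma. There is essentially no genuine obstacle here --- the statement is a direct application of CRT once one verifies that matrix ring factorisation, determinant-reduction compatibility and character factorisation all line up. The only points requiring care are notational: the symbol $b_{d,p}$ in the two displayed identities stands for different integers (inverses modulo $p$ and modulo $p^2$, respectively), and only the coprimality $\gcd(b_{d,p},p)=1$ will matter for the later exponential-sum estimates.
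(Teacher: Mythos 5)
Your argument is correct and is exactly the standard Chinese Remainder Theorem multiplicativity that the paper itself invokes (by citing~\cite[Equation~(12.21)]{IwKow}) rather than proving: the entrywise ring splitting, the compatibility of $\det$ and $L$ with reduction, and the partial-fraction factorisation of the additive character with $b_{d,p}$ an inverse of $d/p$ modulo $p$ (resp.\ of $(d/p)^2$ modulo $p^2$) combine precisely as you describe. Your remark that the two occurrences of $b_{d,p}$ denote different integers, with only $\gcd(b_{d,p},p)=1$ being used later, is also consistent with the paper's statement.
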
 

Hence we now concentrate on the sums $S_{p}(L)$ and  $S_{p^2}(L)$. 

 \subsection{Linear sections of the variety of singular matrices}

 Let $K$ be an arbitrary field and let $\cV_n$ be the variety of 
 $n \times n$ singular matrices over $K$.
 
\begin{lemma}
\label{lem: Lin Sec}
The variety $\cV_n$ is not contained in any hyperplane. 
\end{lemma}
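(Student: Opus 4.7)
The plan is to show that the linear span (in the $n^2$-dimensional space of matrices) of the singular locus $\cV_n$ equals the whole space $K^{n\times n}$, which immediately rules out containment in any hyperplane. Since the statement becomes vacuous or false for $n=1$, I am implicitly assuming $n\ge 2$, which is the only case of interest here.

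First, I would note that $0\in\cV_n$, so any affine hyperplane containing $\cV_n$ must pass through the origin, and hence is the zero set of some nonzero linear form $L(\vX)=\sum_{i,j=1}^n a_{ij}x_{ij}$. This reduces the problem to showing that no such linear form vanishes identically on $\cV_n$.

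Next, for each pair $(i,j)$ with $1\le i,j\le n$, consider the rank-one matrix $E_{ij}$ having a single $1$ in position $(i,j)$ and zeros elsewhere. Since $n\ge 2$, every such matrix has rank $1<n$ and therefore belongs to $\cV_n$. Evaluating $L$ at $E_{ij}$ yields $L(E_{ij})=a_{ij}$, so if $L$ vanished on all of $\cV_n$, we would have $a_{ij}=0$ for every $(i,j)$, contradicting $L\ne 0$.

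The argument is essentially a one-line observation once the right test points are chosen, and there is no serious obstacle. The only minor subtlety is the distinction between linear and affine hyperplanes, which is handled by using $0\in\cV_n$ to reduce to the linear case before invoking the rank-one matrices $E_{ij}$.
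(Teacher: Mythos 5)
Your proof is correct and follows essentially the same route as the paper: both test the putative hyperplane equation on singular matrices supported at a single entry (your $E_{ij}$, the paper's matrices with one arbitrary entry at position $(k,m)$), with your use of $0\in\cV_n$ playing the same role as the paper's observation that $a_{k,m}x_{k,m}=a_0$ cannot hold for all $x_{k,m}$. The explicit remark that $n\ge 2$ is needed is a nice touch but does not change the argument.
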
 

\begin{proof} 
 Assume that $\cV_n$ is contained in a hyperplane $\cH$, 
which is given by the equation 
\[
\sum_{i,j = 1}^n a_{ij} x_{ij} = a_0
\]
with at least one non-zero coefficient, say $a_{k,m} \ne 0$. Considering matrices 
with zero-entries everywhere, except at the $(k,m)$ position, which are clearly singular, we
see that $a_{k,m}  x_{k,m}  = a_0$ for all $x_{k,m}  \in K$, which is clearly impossible
for  $a_{k,m} \ne 0$. 
\end{proof}

We  write  $n \times n$  matrices over $\F_p$ as  $\(\vz \mid\vZ\)$ 
for a vector  $\vz \in \F_p^{n}$ and  an $n \times (n-1)$  matrix
 $ \vZ \in \F_p^{n \times (n-1)}$. We also recall that $N_n(p)$ denotes the number of $n \times n$ matrices $A$  over 
$\F_p$ for which $\det A =0$. 
  
\begin{lemma}
\label{lem: LinForm=Det=0}  Let $\ell(\vz)$ be a nontrivial linear form over $\F_p$. Then the system 
of equations 
\[
\det \(\vz \mid\vZ\)   = \ell(\vz) = 0, \qquad  \(\vz, \vZ\) \in 
 \F_p^{n} \times  \F_p^{n \times (n-1)},
\]
has $p^{n(n-1)} + (p^{n-1}-1)p^{n-1}N_{n-1} (p)$ solutions.
\end{lemma}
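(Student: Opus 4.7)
The plan is to split the count according to whether $\vz = \vec{0}$ or $\vz \ne \vec{0}$, and to exploit the left $\GL_n(\F_p)$-action on the first column to reduce each non-zero case to a canonical form.

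First, when $\vz = \vec{0}$ both conditions $\ell(\vz) = 0$ and $\det(\vz \mid \vZ) = 0$ are automatically satisfied, so $\vZ \in \F_p^{n\times(n-1)}$ is arbitrary. This contributes $p^{n(n-1)}$ solutions, which is the first term.

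Next, for $\vz \ne \vec{0}$, since $\ell$ is a non-trivial linear form on $\F_p^n$, the hyperplane $\{\ell(\vz)=0\}$ has exactly $p^{n-1}$ points and hence $p^{n-1}-1$ non-zero points. For each fixed non-zero $\vz$, I claim the number of $\vZ \in \F_p^{n\times(n-1)}$ making $(\vz \mid \vZ)$ singular equals $p^{n-1} N_{n-1}(p)$, independently of $\vz$. The reason is that any $g \in \GL_n(\F_p)$ sends $(\vz \mid \vZ)$ to $(g\vz \mid g\vZ)$ without changing the singularity status, so the count depends only on the $\GL_n(\F_p)$-orbit of $\vz$, and all non-zero vectors lie in a single orbit. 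I would then choose the representative $\vz = e_1$: expanding the determinant along the first column shows that $(e_1 \mid \vZ)$ is singular if and only if the bottom-right $(n-1)\times(n-1)$ block $\vZ'$ of $\vZ$ is singular, while the top row of $\vZ$ is unconstrained, giving $p^{n-1} \cdot N_{n-1}(p)$ such matrices. Multiplying by $p^{n-1}-1$ yields the second term.

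The main (and only non-routine) point is the orbit argument establishing that the count is uniform in $\vz$; everything else is bookkeeping. Adding the two contributions gives
\[
p^{n(n-1)} + (p^{n-1}-1) p^{n-1} N_{n-1}(p),
\]
as claimed.
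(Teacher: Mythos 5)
Your proof is correct and takes essentially the same approach as the paper: it also splits off the case $\vz=\mathbf{0}$ (giving $p^{n(n-1)}$), and for $\vz\ne\mathbf{0}$ normalises $\vz$ by an invertible row transformation (the paper uses elementary row operations, which is exactly left multiplication by an element of $\GL_n(\F_p)$), counts $p^{n-1}N_{n-1}(p)$ matrices $\vZ$ via an arbitrary top row and a singular $(n-1)\times(n-1)$ block, and multiplies by the $p^{n-1}-1$ nonzero solutions of $\ell(\vz)=0$. Your orbit-transitivity formulation is merely a more structural phrasing of the paper's row-operation reduction, so no further comparison is needed.
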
 

\begin{proof} We  fix a vector $\vz \in \F_p^{n}$  with  $\ell(\vz)=0$. 
Clearly if $\vz = \mathbf{0}$ is the zero-vector, then for any $\vZ \in \F_p^{n \times (n-1)}$
we have $\det \( \mathbf{0}\mid\vZ\)  =0$. Hence there are $p^{n(n-1)}$ such solutions.

Now, let $\vz \ne \mathbf{0}$. Without loss of generality, we can assume 
that $\vz =(z_1, z_2, \ldots, z_n)$ with $z_1 \ne 0$. 
Using elementary row operations, we can reduce the equation 
$\det \(\vz \mid\vZ\) = 0$ to an equation $\det \(\vz_0 \mid \vU\) = 0$ 
for some matrix $\vU$ and with $\vz_0  =(z_1,0, \ldots, 0)$. 
It is now obvious that $\det \(\vz_0 \mid \vU\) = 0$ holds for 
$p^{n-1}N_{n-1} (p)$ matrices $\vU  \in \F_p^{n \times (n-1)}$
with an arbitrary top row and an arbitrary singular $(n-1) \times (n-1)$
matrix over $\F_p$ on the other positions. Observe that  row operations are invertible, 
and so, for a given $\vz$,  each matrix $\vU  \in \F_p^{n \times (n-1)}$ corresponds to a unique matrix $\vZ \in \F_p^{n \times (n-1)}$.
Since there are $p^{n-1}-1$ choices for $\vz \ne \mathbf{0}$ with $\ell(\vz)=0$, there are 
$ (p^{n-1}-1)p^{n-1}N_{n-1} (p)$ such solutions.
\end{proof}  

 \subsection{Bounding  $S_{p}(L)$}
 \label{sec:SpL} 
 
Combining Lemma~\ref{lem: Lin Sec} with  bounds of exponential sums along 
algebraic varieties, see, for example,~\cite[Proposition~1.2]{Fouv}, one can immediately derive
\begin{equation}
\label{eq:SpL  Weil}
S_{p}(L) \ll p^{n^2-3/2}
\end{equation}
for any  nonvanishing linear form $ L(\vX)  \in \F_p[\vX]$.

We prove now stronger results, and we start with the case when $L(\vX)$ depends only on the first column of $\vX$.

 \begin{lemma}
\label{lem: SpL}
For any nonvanishing linear form $\ell(\vz)  \in  \F_p[\vz]$, with $L(\vX) = \ell(\vz)$, 
where $\vX = \(\vz \mid\vZ\)$, 
we have
\[
S_{p}(L) \ll p^{n^2-n}. 
\]
\end{lemma}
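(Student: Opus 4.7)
The plan is to split the matrix variable as $\vX = (\vz \mid \vZ)$, matching the setup of Lemma~\ref{lem: LinForm=Det=0}, so that $L(\vX) = \ell(\vz)$ depends only on the first column. Expanding $\det(\vz \mid \vZ)$ by cofactors along the first column turns the defining condition into a single linear equation in $\vz$ whose coefficients are (up to signs) the $(n-1)\times(n-1)$ minors of $\vZ$. Consequently
\begin{equation*}
S_p(L) = \sum_{\vZ \in \F_p^{n \times (n-1)}} \; \sum_{\substack{\vz \in \F_p^n \\ \det(\vz \mid \vZ) = 0}} \ep\bigl(\ell(\vz)\bigr),
\end{equation*}
and the strategy is to split the outer sum according to the rank of $\vZ$ and carry out the inner sum explicitly.

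First I would dispose of the low-rank contribution. If $\rank \vZ \le n-2$, then every maximal minor of $\vZ$ vanishes, the constraint on $\vz$ becomes vacuous, and the inner sum is $\sum_{\vz \in \F_p^n} \ep(\ell(\vz)) = 0$ since $\ell$ is a nontrivial linear form. So these matrices contribute nothing.

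Next I would treat the full-rank case $\rank \vZ = n-1$. Here the equation $\det(\vz \mid \vZ) = 0$ cuts out exactly the column space $H_\vZ \subseteq \F_p^n$, a hyperplane through the origin, so the inner sum becomes $\sum_{\vz \in H_\vZ} \ep(\ell(\vz))$. This hyperplane-sum evaluates to $p^{n-1}$ when $\ell$ vanishes on $H_\vZ$, that is when $H_\vZ$ is the kernel of $\ell$, and to $0$ otherwise. Thus only those $\vZ$ of rank $n-1$ whose column space equals $\ker \ell$ contribute to $S_p(L)$.

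Finally I would count such $\vZ$: this is the number of ordered bases of the $(n-1)$-dimensional subspace $\ker \ell$, namely
\begin{equation*}
\prod_{k=0}^{n-2}\bigl(p^{n-1} - p^{k}\bigr) \;\le\; p^{(n-1)^2},
\end{equation*}
which yields $S_p(L) \le p^{n-1} \cdot p^{(n-1)^2} = p^{n^2 - n}$, as desired. There is no real obstacle to this argument; the key observation is simply that the condition $L(\vX) = \ell(\vz)$ makes $L$ constant along the directions of the cofactor expansion, reducing the question to an elementary count of hyperplanes and bases. Note that this already beats the Weil-type bound~\eqref{eq:SpL  Weil} as soon as $n \ge 2$.
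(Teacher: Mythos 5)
Your argument is correct, and it takes a genuinely different route from the paper's proof of this particular lemma. The paper does not split by the rank of $\vZ$ here: it averages over the dilations $\vz \mapsto \lambda\vz$ with $\lambda \in \F_p^*$, writes $S_p(L) = \frac{1}{p-1}\(p\Sigma_1 - \Sigma_2\)$ with $\Sigma_1$ the number of solutions of the system $\det\(\vz \mid \vZ\) = \ell(\vz) = 0$ (evaluated exactly in Lemma~\ref{lem: LinForm=Det=0} via row operations) and $\Sigma_2 = N_n(p)$ (via the Brent--McKay formula of Lemma~\ref{lem: Det = 0 Mod d^2}), and gets the bound from the cancellation between these two counts. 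Your proof is more direct and self-contained: the rank dichotomy on $\vZ$, the vanishing of the complete sum $\sum_{\vz}\ep(\ell(\vz))$ in the low-rank case, and orthogonality of additive characters on the column space $H_\vZ$ in the full-rank case show that only those $\vZ$ of rank $n-1$ with $H_\vZ = \ker \ell$ contribute, giving the exact evaluation $S_p(L) = p^{n-1}\prod_{k=0}^{n-2}\(p^{n-1}-p^k\)$; this recovers $S_p(L) \ll p^{n^2-n}$, shows the bound is sharp, and avoids both auxiliary counting lemmas (it agrees with the paper, whose computation implicitly yields $S_p(L) = p^{n(n-1)} - p^{n-1}N_{n-1}(p)$, the same quantity). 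It is worth noting that your rank-splitting device and the identity~\eqref{eq:Vanishing} are exactly the tools the paper deploys later, in the proofs of Lemma~\ref{lem: SpL1} and Lemma~\ref{lem: Sp2L}, so your argument fits naturally into the paper's framework while simplifying this step.
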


\begin{proof} Using that for any  $\lambda \in \F_p^*$, the map $\vz \mapsto \lambda \vz$ 
permutes $ \F_p^{n}$, we can write 
\begin{align*}
S_{p}(L)&=  
 \sum_{\vZ \in \F_p^{n \times (n-1)} } \,   
 \sum_{\substack{\vz \in \F_p^{n}\\  \det \( \vz \mid\vZ\)  =0  } }
 \ep\(\ell(\vz )\) \\
 &  = \frac{1}{p-1} \sum_{\lambda \in \F_p^*} 
\sum_{\vZ \in \F_p^{n \times (n-1)} } \,   
\sum_{\substack{\vz \in \F_p^{n}\\  \det \( \lambda \vz \mid\vZ\)  =0   } }
\ep\(\ell( \lambda\vz )\) \\
&  = \frac{1}{p-1} \sum_{\vZ \in \F_p^{n \times (n-1)} } \,   
 \sum_{\substack{\vz \in \F_p^{n}\\  \det \(\vz \mid\vZ\)   =0  } }
 \sum_{\lambda \in \F_p^*} \ep\( \lambda\ell(\vz )\). 
 \end{align*}
 Hence
 \begin{equation}
\label{eq:S Sigma12}
S_{p}(L)  = \frac{1}{p-1} \(p \Sigma_1 -\Sigma_2\),
\end{equation} 
where 
\begin{align*}
&\Sigma_1 =   \sum_{\vZ \in \F_p^{n \times (n-1)} }  \sum_{\substack{\vz \in \F_p^{n}\\  \det \(\vz \mid\vZ\)   =0  \\ \ell(\vz)  =0 } } 1,\\
&\Sigma_2=  \sum_{\vZ \in \F_p^{n \times (n-1)} } \sum_{\substack{\vz \in \F_p^{n}\\  \det \(\vz \mid\vZ\)  =0   } }1.
\end{align*}

By Lemma~\ref{lem: LinForm=Det=0}, we have
\begin{equation}
\label{eq:Sigma1}
\Sigma_1 = p^{n(n-1)} + \(p^{n-1}-1\)p^{n-1}N_{n-1} (p), 
\end{equation} 
and obviously 
\begin{equation}
\label{eq:Sigma2}\Sigma_2 = N_{n} (p).
\end{equation}
We now recall Lemma~\ref{lem: Det = 0 Mod d^2}, and see that
\begin{align*}
N_{n} (p)/p^{n^2} & =  1 - \prod_{j = 1}^{n} \(1-p^{-j}\) \\
&=  1 -  \(1-N_{n-1} (p)/p^{(n-1)^2}\) \(1-p^{-n}\) \\
&=   \(1-p^{-n}\)  N_{n-1} (p)/p^{(n-1)^2} + p^{-n}.
\end{align*}
Hence 
\[
N_{n} (p) =   \(1-p^{-n}\)  p^{2n-1}  N_{n-1} (p) + p^{n^2-n}. 
\]
Recalling~\eqref{eq:Sigma1} and~\eqref{eq:Sigma2}, we see that 
\begin{align*}
p \Sigma_1 -\Sigma_2 & = p^{n(n-1)+1} + \(p^{n-1}-1\)p^{n}N_{n-1} (p)\\
& \qquad \qquad \quad -  \(1-p^{-n}\) p^{2n-1} N_{n-1} (p)  - p^{n^2-n}\\
& =  (p-1) p^{n(n-1)}  - (p-1) p^{n-1}N_{n-1} (p)  . 
\end{align*}
Since we see from Lemma~\ref{lem: Det = 0 Mod d^2}, that $N_{n-1} (p)   \ll p^{n^2-2n}$
we now conclude that 
\[
p \Sigma_1 -\Sigma_2 =  p^{n(n-1)+1}  + O\( p^{n^2-n}\), 
\]
which after substitution in~\eqref{eq:S Sigma12} concludes the proof. 
\end{proof}

We now extend the bound of Lemma~\ref{lem: SpL}, with a weaker saving but still improving~\eqref{eq:SpL  Weil}, to any nontrivial (that is, not vanishing identically) form $L(\vX)$ over $\F_p$.

\begin{lemma}
\label{lem: SpL1}
For any nontrivial linear form $ L(\vX)  \in \F_p[\vX]$, 
we have 
\[
S_{p}(L) \ll p^{n^2-(n+1)/2}. 
\]
\end{lemma}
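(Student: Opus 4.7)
The plan is to adapt the argument of Lemma~\ref{lem: SpL} to a general nontrivial linear form by first reducing to a favourable normalisation of $L$ and then isolating a single column. Decompose $L(\vX) = \sum_{j=1}^n \ell_j(\vx_j)$, where $\vx_j$ is the $j$-th column of $\vX$ and each $\ell_j$ is a linear form on $\F_p^n$. Since $L$ is nontrivial, some $\ell_{j_0}$ is nonzero; substituting $\vX \mapsto \vX P$ for a suitable permutation matrix $P$ merely changes the sign of $\det \vX$ (preserving the singularity condition) and permutes the $\ell_j$, so it leaves $|S_p(L)|$ unchanged. I may therefore assume $\ell_1 \ne 0$ from the start.

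Partition $\vX = (\vz \mid \vZ)$ and write $L(\vX) = \ell_1(\vz) + M(\vZ)$. Summing in $\vz$ first yields
\[
S_p(L) = \sum_{\vZ \in \F_p^{n\times(n-1)}} \ep(M(\vZ))\, T(\vZ), \qquad T(\vZ) = \sum_{\substack{\vz \in \F_p^n\\ \det(\vz\mid \vZ) = 0}} \ep(\ell_1(\vz)).
\]
The main step is a case analysis on $\rank \vZ$. If $\rank \vZ < n-1$, then $\det(\vz\mid\vZ) = 0$ for every $\vz$, so $T(\vZ) = \sum_{\vz \in \F_p^n}\ep(\ell_1(\vz)) = 0$ since $\ell_1 \ne 0$. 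If $\rank \vZ = n-1$, the determinant condition forces $\vz \in \Span(\vZ)$; summing $\ep(\ell_1)$ over this $(n-1)$-dimensional subspace gives $p^{n-1}$ when $\ell_1$ vanishes on $\Span(\vZ)$ (equivalently, when $\Span(\vZ) = \ker \ell_1$) and $0$ otherwise. Hence only those $\vZ$ whose columns form a basis of the hyperplane $\ker \ell_1$ survive.

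To parametrise the surviving set, fix any $n\times(n-1)$ matrix $V$ whose columns span $\ker \ell_1$; each surviving $\vZ$ then has a unique representation $\vZ = VA$ with $A \in \GL_{n-1}(\F_p)$. Setting $\tilde M(A) := M(VA)$, which is a linear form in the entries of $A$, we arrive at
\[
S_p(L) = p^{n-1}\sum_{A \in \GL_{n-1}(\F_p)} \ep\bigl(\tilde M(A)\bigr).
\]
I expect the reduction to this $\GL_{n-1}$-sum to be the main conceptual content of the argument; once it is in place, no delicate estimate is needed. The trivial bound $|\GL_{n-1}(\F_p)| \le p^{(n-1)^2}$ already yields $|S_p(L)| \le p^{n-1} \cdot p^{(n-1)^2} = p^{n^2-n}$, which is in fact stronger than the claimed bound $p^{n^2-(n+1)/2}$ and completes the proof. (Should a more refined bound on the $\GL_{n-1}$-sum be desired, one could split it as $\sum_{\F_p^{(n-1)^2}} - \sum_{\det A = 0}$, with the first sum vanishing when $\tilde M \not\equiv 0$ and the second handled inductively or via the Weil bound, but this is not needed for the stated estimate.)
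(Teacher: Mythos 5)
Your argument is correct, and it takes a genuinely different route from the paper. The paper handles a general nontrivial form by applying the Cauchy inequality in the $\vY$-variables, using the rank dichotomy so that complete sums over $\vz$ vanish, and then reducing to the first-column case (Lemma~\ref{lem: SpL}); the Cauchy step costs a square root and is exactly why the paper only gets the exponent $n^2-(n+1)/2$. You instead evaluate the inner sum over the first column exactly: for $\mathrm{rk}\,\vZ\le n-2$ it is a complete sum and vanishes, while for $\mathrm{rk}\,\vZ=n-1$ it is a character sum over the $(n-1)$-dimensional subspace $\Span(\vZ)$, hence equals $p^{n-1}$ precisely when $\Span(\vZ)=\ker\ell_1$ and $0$ otherwise; the surviving $\vZ$ are the ordered bases of $\ker\ell_1$, parametrised by $\GL_{n-1}(\F_p)$, so that
\[
S_p(L)=p^{n-1}\sum_{A\in \GL_{n-1}(\F_p)}\ep\bigl(\tilde M(A)\bigr),
\qquad |S_p(L)|\le p^{n-1}\,\#\GL_{n-1}(\F_p)< p^{n^2-n}.
\]
All the individual steps check out (the column permutation only permutes the $\ell_j$ and preserves the singularity condition; $\det(\vz\mid\vZ)=0$ with $\mathrm{rk}\,\vZ=n-1$ is indeed equivalent to $\vz\in\Span(\vZ)$; the representation $\vZ=VA$ with $A\in\GL_{n-1}(\F_p)$ is a bijection), and small-$n$ examples (e.g.\ $n=2$ with $L$ the trace, where $S_p(L)=-p$) confirm the identity. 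Your bound $p^{n^2-n}$ is in fact \emph{stronger} than the stated $p^{n^2-(n+1)/2}$ for all $n\ge 2$, and it is sharp, since a monomial form attains it ($\tilde M\equiv 0$ gives $S_p(L)=p^{n-1}\#\GL_{n-1}(\F_p)$). If adopted, this would remove the need for the monomial/non-monomial distinction in Lemma~\ref{lem: Sd bound} and would slightly improve the exponents downstream (in Lemma~\ref{lem: Sp2L} and ultimately in the error terms of the main theorems), so what you lose relative to the paper is nothing beyond the fact that you prove more than was asked.
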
 

\begin{proof}
Since the form $L(\vX)=\sum_{i,j = 1}^n a_{ij} x_{ij} \in\F_p[\vX]$  is nontrivial,  without loss of generality, we may assume $a_{1j}\ne0$ for some $j=1,\ldots,n$. 
We write every matrix $\vX \in  \F_p^{n \times n}$ as
\[
\vX = \(\vx \mid\vY\)  
\]
with a column  $\vx \in  \F_p^{n}$ and a matrix  $\vY \in  \F_p^{n \times (n-1)}$. 
Therefore, $L(\vX)$ can be written in the form
\[
L(\vX)=\ell(\vx) +L^*(\vY),
\]  
for some linear forms $\ell$ and $L^*$ in $n$ and $n(n-1)$ variables, respectively, where, by our assumption, $\ell$ is nontrivial modulo $p$.

Thus 
\begin{align*}
S_{p}(L)&=  
\dsum_{\substack {\vY \in  \F_p^{n \times (n-1)} \ \vx \in  \F_p^{n}\\ 
\det \(\vx \mid\vY\)  =0}}
 \ep\(\ell(\vx) + L^*(\vY)\)\\
 &=\sum_{\vY \in  \F_p^{n \times (n-1)}}\ep\(L^*(\vY)\)\sum_{\substack {\vx \in  \F_p^{n}\\ 
\det \(\vx \mid\vY\)  =0}} \ep\(\ell(\vx)\).
\end{align*}
Taking absolute values, we obtain
\[
|S_{p}(L)|\le \sum_{\vY \in  \F_p^{n \times (n-1)}} \left|\sum_{\substack{\vx \in  \F_p^{n}\\ 
\det \(\vx \mid\vY\)  =0}} \ep\(\ell(\vx)\)\right|.
\]
By the Cauchy inequality, 
\begin{equation}
\label{eq:SpL^2}
\begin{split}
|S_{p}(L)|^2&\le p^{n^2-n} \sum_{\vY \in  \F_p^{n \times (n-1)}} \left|\sum_{\substack{\vx \in  \F_p^{n}\\ 
\det \(\vx \mid\vY\)  =0}} \ep\(\ell(\vx)\)\right|^2\\
&=p^{n^2-n} \sum_{\vY \in  \F_p^{n \times (n-1)}} \sum_{\substack{\vx_1,\vx_2 \in  \F_p^{n}\\ 
\det \(\vx_i \mid\vY\)  =0,\ i=1,2}} \ep\(\ell(\vx_1-\vx_2)\).
\end{split}
\end{equation}
We now make  the change of variables 
\[
\vx = \vx_1 \mand \vz=\vx_1-\vx_2
\] 
and notice that 
\[
\det \(\vx_1 \mid\vY\)  = \det \(\vx_2 \mid\vY\)  =0
\]
is equivalent to 
\[
\det \(\vx \mid\vY\)  =  \det \(\vx_1 \mid\vY\) =0
\]
and 
\begin{align*}
\det \(\vz \mid\vY\) &=  \det \(\vx_1-\vx_2 \mid\vY\)   \\
& = \det \(\vx_1\mid\vY\) -  \det \(\vx_2\mid\vY\) =0.
\end{align*}
Hence, we see that~\eqref{eq:SpL^2} implies 
\[
|S_{p}(L)|^2\le  p^{n^2-n} \sum_{\vY \in  \F_p^{n \times (n-1)}} \sum_{\substack{\vx \in  \F_p^{n}\\ 
\det \(\vx \mid\vY\)  =0}} \, \sum_{\substack{\vz \in  \F_p^{n}\\ 
\det \(\vz \mid\vY\) =0}} \ep\(\ell(\vz)\).
\]

We now split the summation over $\vY \in  \F_p^{n \times (n-1)} $ 
into two parts depending on whether it is of full rank over $\F_p$ or 
of  rank  $\rank_p \vY \le n-2$. 

We observe that if $\rank_p \vY \le n-2$ then for all $\vz \in  \F_p^{n}$ we have 
$\det \( \vz \mid\vY\)   =0$.  Hence, 
\begin{equation}
\label{eq:Vanishing}
  \sum_{\substack{\vz \in  \F_p^{n}\\  \det \( \vz \mid\vY\)  =0   } }
 \ep\(\ell(\vz )\) = 
   \sum_{\vz \in  \F_p^{n} }
 \ep\(\ell(\vz )\) = 0
\end{equation}
for any nontrivial modulo $p$ linear form $\ell(\vz)$. Therefore, we obtain
\[
|S_{p}(L)|^2\le p^{n^2-n} \sum_{\substack{\vY \in  \F_p^{n \times (n-1)}\\ \rank_p \vY=n-1}}\, \sum_{\substack{\vx \in  \F_p^{n}\\ 
\det \(\vx \mid\vY\) =0}} \, \sum_{\substack{\vz \in  \F_p^{n}\\ 
\det \(\vz \mid\vY\)  =0}} \ep\(\ell(\vz)\).
\]
We note that for $\vY \in  \F_p^{n \times (n-1)}$ with $\rank_p \vY = n-1$ we have 
 \[
 \sum_{\substack {  \vx \in  \F_p^{n}\\   
\det \(\vx \mid\vY\) =0}} 1 = p^{n-1}
\]
since it counts the number of solutions to some nontrivial linear congruence. 
Hence, 
\[
|S_{p}(L)|^2\le p^{n^2-1} \sum_{\substack{\vY \in  \F_p^{n \times (n-1)}\\ \rank_p \vY=n-1}}\,  \sum_{\substack{\vz \in  \F_p^{n}\\ 
\det \(\vz \mid\vY\)  =0}} \ep\(\ell(\vz)\),
\]
where, using the identity~\eqref{eq:Vanishing}, we can add back the terms with $\rank_p \vY \le n-2$ 
and rewrite the above bound as
\[
|S_{p}(L)|^2\le p^{n^2-1} \sum_{\vY \in  \F_p^{n \times (n-1)}}\,  \sum_{\substack{\vz \in  \F_p^{n}\\ 
\det \(\vz \mid\vY\)  =0}} \ep\(\ell(\vz)\). 
\]
Thus, together with Lemma~\ref{lem: SpL}, we   obtain
\begin{align*}
|S_{p}(L)|^2&\le p^{2n^2-n-1},
\end{align*}
which concludes the proof.
\end{proof}

 \subsection{Bounding $S_{p^2}(L)$}
 \label{sec:Sp2L}   A variation of some of the arguments from Section~\ref{sec:SpL}
 also allows us to estimate  $S_{p^2}(L)$.
 
 \begin{lemma}
\label{lem: Sp2L}
For any  linear form $ L(\vX)  \in \Z[\vX]$, which is  nonvanishing  modulo $p$, 
we have
\[
S_{p^2}(L) \ll p^{2n^2-(n+3)/2}. 
\] 
\end{lemma}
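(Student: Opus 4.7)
The plan is to exploit the $p$-adic decomposition $\vX=\vY+p\vW$, where $(\vY,\vW)\in\Z_p^{n\times n}\times\Z_p^{n\times n}$ is a complete system of representatives for $\Z_{p^2}^{n\times n}$. Under this bijection $L(\vX)\equiv L(\vY)+pL(\vW)\bmod p^2$ yields $\epp(L(\vX))=\epp(L(\vY))\,\ep(L(\vW))$, while the classical expansion
\[
\det(\vY+p\vW)\equiv\det\vY+p\,\Tr\(\mathrm{adj}(\vY)\vW\)\bmod p^2
\]
reduces the congruence $\det\vX\equiv0\bmod p^2$ to a single linear congruence in $\vW$ once $\det\vY\equiv0\bmod p$ has been imposed. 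Writing $a(\vY)=\det\vY/p\bmod p$ and $c_{ij}(\vY)$ for the reduction modulo $p$ of the $(i,j)$-cofactor of $\vY$, this factorises the sum as
\[
S_{p^2}(L)=\sum_{\vY:\,\det\vY\equiv0\bmod p}\epp(L(\vY))\,T(\vY),
\]
where
\[
T(\vY)=\sum_{\substack{\vW\in\Z_p^{n\times n}\\ a(\vY)+\sum_{i,j}c_{ij}(\vY)w_{ij}\equiv 0\bmod p}}\ep(L(\vW)).
\]

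First I would dispatch the case $\rank_p\vY\le n-2$. By the Smith normal form of $\vY$ over $\Z_{(p)}$, this rank condition forces $v_p(\det\vY)\ge 2$, hence $a(\vY)\equiv 0\bmod p$; at the same time every $(n-1)\times(n-1)$ minor of $\vY$ vanishes modulo $p$, so $c_{ij}(\vY)\equiv 0$ for all $i,j$. The inner linear constraint is therefore vacuous and $T(\vY)=\sum_{\vW\in\Z_p^{n\times n}}\ep(L(\vW))=0$, since $L$ is nonvanishing modulo $p$. Hence this stratum contributes nothing.

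For the complementary case $\rank_p\vY=n-1$, orthogonality over $\vW$ yields $T(\vY)=p^{n^2-1}\ep(s\,a(\vY))$ whenever there is some $s\in\F_p$ with $\ell_{ij}\equiv -s\,c_{ij}(\vY)\bmod p$ for every pair $(i,j)$, and $T(\vY)=0$ otherwise. Since $L$ is nonvanishing modulo $p$ the scalar $s$ is uniquely determined and nonzero, and since the cofactor matrix $C(\vY)=\mathrm{adj}(\vY)^{\mathrm T}$ has rank exactly $1$ over $\F_p$ when $\rank_p\vY=n-1$ (as $\vY\,\mathrm{adj}(\vY)\equiv 0$ but $\mathrm{adj}(\vY)\not\equiv0\bmod p$), the proportionality $L\parallel C(\vY)$ forces the coefficient matrix $(\ell_{ij})$ itself to be rank $1$ modulo $p$. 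If $L$ fails this rank condition then $S_{p^2}(L)=0$ outright; otherwise, writing $L\equiv\alpha\beta^{\mathrm T}\bmod p$ with $\alpha,\beta\in\F_p^n\setminus\{0\}$, the identities $\vY\,\mathrm{adj}(\vY)=\mathrm{adj}(\vY)\,\vY=0$ translate this proportionality into the two linear conditions $\vY\beta\equiv 0$ and $\alpha^{\mathrm T}\vY\equiv 0$ modulo $p$.

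The rank-$(n-1)$ matrices $\vY$ satisfying these two conditions factor through an induced isomorphism $\F_p^n/\langle\beta\rangle\to\alpha^\perp$ between two $(n-1)$-dimensional spaces, so they are in bijection with $\GL_{n-1}(\F_p)$, giving a count of $O(p^{(n-1)^2})$. Combining with the crude bound $|T(\vY)|\le p^{n^2-1}$ produces $|S_{p^2}(L)|\ll p^{(n-1)^2+n^2-1}=p^{2n^2-2n}$, which is at least as strong as $p^{2n^2-(n+3)/2}$ for every $n\ge 1$, as required. The most delicate step is the vanishing in the case $\rank_p\vY\le n-2$: it requires combining the Smith-normal-form upgrade from mod-$p$ rank deficiency to $p^2$-divisibility of the determinant with the hypothesis that $L$ is nontrivial modulo $p$, and the interaction of these two inputs is precisely what eliminates the entire rank-deficient stratum from the exponential sum and delivers the saving.
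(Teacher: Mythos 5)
Your proof is correct, and it takes a genuinely different route from the paper's, ending up with a stronger bound. The paper singles out the first column, averages it over shifts by $p\vy$, applies the Cauchy inequality and falls back on the mod $p$ estimate of Lemma~\ref{lem: SpL}; this costs a square root and yields $p^{2n^2-(n+3)/2}$. You instead use the full lift $\vX=\vY+p\mathbf{W}$ and the adjugate expansion $\det(\vY+p\mathbf{W})\equiv\det\vY+p\Tr(\mathrm{adj}(\vY)\mathbf{W}) \bmod p^2$, which converts the mod $p^2$ condition into a single linear congruence in $\mathbf{W}$ and lets the inner character sum $T(\vY)$ be evaluated exactly, with no Cauchy step and no appeal to Lemma~\ref{lem: SpL} or Lemma~\ref{lem: SpL1}. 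All your steps check out: on the stratum $\rank_p\vY\le n-2$ the contribution vanishes (in fact even without the Smith-normal-form upgrade: if $a(\vY)\not\equiv0\bmod p$ the constraint is unsatisfiable and the sum is empty, while if $a(\vY)\equiv0$ it is a complete sum of a nontrivial character), and on the stratum $\rank_p\vY=n-1$ the adjugate has rank one, so $T(\vY)\ne0$ forces the coefficient matrix of $L$ to be a nonzero scalar multiple of it, and the admissible $\vY$ are in bijection with $\GL_{n-1}(\F_p)$. This gives $|S_{p^2}(L)|\le\#\GL_{n-1}(\F_p)\cdot p^{n^2-1}\le p^{2n^2-2n}$, and even $S_{p^2}(L)=0$ unless the coefficient matrix of $L$ has rank at most one modulo $p$; since $2n\ge(n+3)/2$ for $n\ge1$, this implies the stated bound. (A sanity check for $n=2$: a direct computation gives $S_{p^2}(x_{11})=p^4-p^3$, which your count reproduces exactly, and $S_{p^2}(x_{11}+x_{22})=0$.) The paper's argument has the virtue of recycling the mod $p$ machinery it already needs elsewhere, but your stationary-phase-type decomposition exploits the structure of the determinant modulo $p^2$ directly and is sharper; if propagated through Lemma~\ref{lem: Sd2 bound} and Lemma~\ref{lem: Nd2H} it would improve the exponent $\gamma_n$ in Theorem~\ref{thm:SF-Det}.
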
 

\begin{proof} The argument below has several similarities with that in the proof of Lemma~\ref{lem: SpL1}, 
however we present it in full detail. 

Since in the proof we work with both elements of  $ \Z_{p^2}$ and  of $\Z_p$,  we use the language 
of congruences rather than of equations in the corresponding rings. 

 Without loss of generality, we can assume that $L(\vX)$ 
is nontrivial modulo $p$ with respect to the first column of $\vX$. 

We write every matrix $\vX \in \Z_{p^2}^{n \times n}$ as
\[
\vX = \(\vx \mid\vY\)  
\]
with a column  $\vx \in \Z_{p^2}^{n}$ and a matrix  $\vY \in \Z_{p^2}^{n \times (n-1)}$.

Thus 
\[
S_{p^2}(L)  = 
\dsum_{\substack {\vY \in \Z_{p^2}^{n \times (n-1)} \ \vx \in \Z_{p^2}^{n}\\ 
\det \(\vx \mid\vY\)  \equiv 0 \bmod {p^2}}}
 \epp\(\ell(\vx) + L^*(\vY)\)
\]
for some linear forms $\ell$ and $L^*$ in $n$ and $n(n-1)$ variables, respectively, 
where by our assumption, $\ell$ is nontrivial modulo $p$. 

Furthermore,  we have
\[
S_{p^2}(L)  = \frac{1}{p^n}   \sum_{\vy \in \Z_{p}^{n}} 
\dsum_{\substack {\vY \in \Z_{p^2}^{n \times (n-1)} \ \vx \in \Z_{p^2}^{n}\\ 
\det \(\vx + p\vy  \mid\vY\)  \equiv 0 \bmod {p^2}}}
 \epp\(\ell(\vx+p\vy ) + L^*(\vY)\).
\]

Next, we  observe that for  $\vx \in \Z_{p^2}^{n}$ and  $\vy \in \Z_{p}^{n}$ we have 
\[
\ell \(\vx  + p \vy\) \equiv \ell(\vx)+p \ell(\vy)   \bmod {p^2}
\]
and  
\begin{equation}
\label{eq:det x y}
 \det \( \vx  + p \vy \mid\vY\) 
\equiv \det \( \vx   \mid\vY\) +p  \det \(  \vy \mid\vY\)  \bmod {p^2} .
\end{equation}
Note that $ \det \( \vx  + p \vy \mid\vY\)   \equiv 0   \bmod {p^2}$ 
implies $ \det \( \vx  \mid\vY\)   \equiv 0   \bmod {p}$, which we write as
$ \det \( \vx  \mid\vY\) = - a_{\vx, \vY} p$ for some integer $a_{\vx, \vY}$,  which is
uniquely defined modulo $p$. Therefore, from~\eqref{eq:det x y}, we conclude that 
\[
\det \(  \vy \mid\vY\) \equiv a_{\vx, \vY} \bmod p.
\]

Thus, changing the order of summation, we write
\begin{align*}
S_{p^2}(L)  = \frac{1}{p^n}  
\dsum_{\substack {\vY \in \Z_{p^2}^{n \times (n-1)} \ \vx \in \Z_{p^2}^{n}\\ 
\det \(\vx \mid\vY\)  \equiv 0 \bmod {p}}} & \epp\(\ell(\vx)) + L^*(\vY)\)\\
& \quad \sum_{\substack{\vy \in \Z_{p}^{n}\\  \det \( \vy \mid\vY\)   \equiv a_{\vx, \vY}    \bmod p } }
 \ep\(\ell(\vy )\), 
\end{align*} 
and therefore 
\[
\abs{S_{p^2}(L)}  \le \frac{1}{p^n}  
\dsum_{\substack {\vY \in \Z_{p^2}^{n \times (n-1)} \ \vx \in \Z_{p^2}^{n}\\ 
\det \(\vx \mid\vY\)  \equiv 0 \bmod {p}}}  \abs{\sum_{\substack{\vy \in \Z_{p}^{n}\\  \det \( \vy \mid\vY\)   \equiv a_{\vx, \vY}    \bmod p } }
 \ep\(\ell(\vy )\) }.
\]

Clearly, 
\[
\dsum_{\substack {\vY \in \Z_{p^2}^{n \times (n-1)} \ \vx \in \Z_{p^2}^{n}\\ 
\det \(\vx \mid\vY\)  \equiv 0 \bmod {p}}} 1
\le p^{n^2} \sum_{\substack {\vX \in \Z_{p}^{n \times n} \\ 
\det \(\vX\)  \equiv 0 \bmod {p}}} 1 \ll  p^{2n^2 -1}. 
\]
Therefore, by the Cauchy inequality
\begin{align*}
S_{p^2}(L)^2 & \ll   p^{2n^2 -2n - 1}
\dsum_{\substack {\vY \in \Z_{p^2}^{n \times (n-1)} \ \vx \in \Z_{p^2}^{n}\\ 
\det \(\vx \mid\vY\)  \equiv 0 \bmod {p}}}  \abs{\sum_{\substack{\vy \in \Z_{p}^{n}\\  \det \( \vy \mid\vY\)   \equiv a_{\vx, \vY}    \bmod p } }
 \ep\(\ell(\vy )\) }^2 \\
 & \ll   p^{2n^2 -2n - 1}
\dsum_{\substack {\vY \in \Z_{p^2}^{n \times (n-1)} \ \vx \in \Z_{p^2}^{n}\\ 
\det \(\vx \mid\vY\)  \equiv 0 \bmod {p}}} \,  \sum_{\substack{\vy_1\vy_2 \in \Z_{p}^{n}\\  \det \( \vy_1 \mid\vY\)   \equiv  a_{\vx, \vY}    \bmod p \\
 \det \( \vy_2 \mid\vY\)   \equiv  a_{\vx, \vY}    \bmod p } }
 \ep\(\ell(\vy_1-\vy_2 )\) .
\end{align*}

Writing $\vz = \vy_1-\vy_2$, we see that 
\[ \det \( \vz \mid\vY\)   \equiv    \det \( \vy_1\mid\vY\)  -  \det \( \vy_2\mid\vY\)   \equiv  0   \bmod p.
\] Hence changing the variables we obtain 
\begin{equation}
\label{eq:S^2}
\begin{split}
S_{p^2}(L)^2  
  \ll   p^{2n^2 -2n - 1}
\dsum_{\substack {\vY \in \Z_{p^2}^{n \times (n-1)} \ \vx \in \Z_{p^2}^{n}\\ 
\det \(\vx \mid\vY\)  \equiv 0 \bmod {p}}} \, & \sum_{\substack{\vy \in \Z_{p}^{n}\\  \det \( \vy  \mid\vY\)   \equiv  a_{\vx, \vY}    \bmod p   } }\\
 & \quad \sum_{\substack{\vz \in \Z_{p}^{n}\\  \det \( \vz \mid\vY\)   \equiv  0  \bmod p   } }
 \ep\(\ell(\vz )\).
\end{split}
\end{equation}
We now split the summation over $\vY \in \Z_{p^2}^{n \times (n-1)} $ 
into two parts depending on whether it is of full rank over $\Z_p$ or 
of  rank  $\rank_p \vY \le n-2$. 

We recall that  if $\rank_p \vY \le n-2$ then we have~\eqref{eq:Vanishing}.
Therefore we now derive from~\eqref{eq:S^2} that 
\begin{equation}
\label{eq:S^2 T}
S_{p^2}(L)^2   \ll  p^{2n^2 -2n - 1} T, 
\end{equation}
where 
\[
T = 
\dsum_{\substack {\vY \in \Z_{p^2}^{n \times (n-1)} \ \vx \in \Z_{p^2}^{n}\\
\det \(\vx \mid\vY\)  \equiv 0 \bmod {p}\\   \rank_p \vY = n-1}} \  \sum_{\substack{\vy \in \Z_{p}^{n}\\  \det \( \vy  \mid\vY\)   \equiv  a_{\vx, \vY}    \bmod p   } }\,
  \sum_{\substack{\vz \in \Z_{p}^{n}\\  \det \( \vz \mid\vY\)   \equiv  0  \bmod p   } }
 \ep\(\ell(\vz )\).
 \]
We note that for $\vY \in \Z_{p^2}^{n \times (n-1)}$ with $\rank_p \vY = n-1$ we have 
 \[
 \sum_{\substack {  \vx \in \Z_{p^2}^{n}\\   
\det \(\vx \mid\vY\)  \equiv 0 \bmod {p}}} 1 = p^{2n-1}
\mand \sum_{\substack{\vy \in \Z_{p}^{n}\\  \det \( \vy  \mid\vY\)   \equiv  a_{\vx, \vY}    \bmod p   } } 1= p^{n-1}
\]
(since both count the number of solutions to some nontrivial linear congruences). 
Hence,  we can simplify the formula for $T$ as 
\[
T = p^{3n-2}
\sum_{\substack {\vY \in \Z_{p^2}^{n \times (n-1)}  \\   \rank_p \vY = n-1}} \,   
  \sum_{\substack{\vz \in \Z_{p}^{n}\\  \det \( \vz \mid\vY\)   \equiv  0  \bmod p   } }
 \ep\(\ell(\vz )\).
 \]
 Next, recalling~\eqref{eq:Vanishing}, we bring back to $T$ the contribution from the matrices $\vY \in \Z_{p^2}^{n \times (n-1)}$ 
 with $\rank_p \vY \le  n-2$.   That is, we can drop the condition $ \rank_p \vY = n-1$ 
 and write
\begin{align*}
T& = p^{3n-2}
\sum_{ \vY \in \Z_{p^2}^{n \times (n-1)} } \,   
  \sum_{\substack{\vz \in \Z_{p}^{n}\\  \det \( \vz \mid\vY\)   \equiv  0  \bmod p   } }
 \ep\(\ell(\vz )\)\\
&  =  p^{3n-2} \cdot p^{n(n-1)} \sum_{\vZ \in \Z_{p}^{n \times (n-1)} } \,   
  \sum_{\substack{\vz \in \Z_{p}^{n}\\  \det \( \vz \mid\vZ\)   \equiv  0  \bmod p   } }
 \ep\(\ell(\vz )\). 
\end{align*}
Invoking Lemma~\ref{lem: SpL}, we obtain 
\[
T \ll   p^{3n-2} \cdot p^{n(n-1)} \cdot p^{n^2 - n} = p^{2n^2 +n- 2} 
\]
and substituting this bound in~\eqref{eq:S^2 T}, 
we  derive the desired result. 
\end{proof}

  \subsection{Bounding $S_{d}(L)$ and $S_{d^2}(L)$}

  We first  combine Lemma~\ref{lem: Sd2 Sp2} with the estimates from Sections~\ref{sec:SpL} and~\ref{sec:Sp2L}   to derive a bound on $S_{d}(L)$ for an arbitrary square-free integer $d$.
  
For a linear form $L$ as in~\eqref{eq: LinForm} and an integer $m\ge 1$, we define
$\gcd(L, m)$ as the greatest common divisor of the coefficients of $L$ and $m$.

We start with an upper bound for $S_{d}(L)$.

\begin{lemma}
\label{lem: Sd bound}
For any square-free integer $d\ge 1$ and a linear form $ L(\vX)  \in \Z[\vX]$, 
we have
\[
|S_{d}(L)| \le  d^{n^2+o(1)} 
\begin{cases} 
(D/d)^n& \text{if $L$ is a monomial,}\\
(D/d)^{(n+1)/2} & \text{otherwise,}
\end{cases} 
\]
where $\gcd(L, d)=D$. 
\end{lemma}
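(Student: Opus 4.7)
The plan is to establish the bound multiplicatively, reducing to the prime-modulus estimates from Section~\ref{sec:SpL}. By Lemma~\ref{lem: Sd2 Sp2}, since $d$ is square-free,
\[
|S_d(L)| = \prod_{p \mid d} |S_p(b_{d,p} L)|, \qquad \gcd(b_{d,p}, p) = 1,
\]
so it suffices to estimate each local factor $|S_p(b_{d,p} L)|$ and multiply.

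I would split the primes dividing $d$ into those dividing $D$ and those not. For $p \mid D$ every coefficient of $L$ is divisible by $p$, so the form $b_{d,p} L$ is identically zero modulo $p$ (the unit $b_{d,p}$ does not matter once the coefficients themselves vanish); hence $S_p(b_{d,p} L) = N_n(p) \le p^{n^2}$ by Lemma~\ref{lem: Det = 0 Mod d^2}. For $p \mid d$ with $p \nmid D$, the form $b_{d,p} L$ is nontrivial modulo $p$. If $L$ is a monomial $a_{ij} x_{ij}$, a permutation of rows and columns of the matrix variable $\vX$ gives a bijection of the singular variety over $\F_p$, leaves $|S_p(\cdot)|$ unchanged, and moves the surviving coefficient into the first column, so the form falls under the hypotheses of Lemma~\ref{lem: SpL}, yielding $|S_p(b_{d,p} L)| \ll p^{n^2 - n}$. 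Otherwise, Lemma~\ref{lem: SpL1} applies directly and gives $|S_p(b_{d,p} L)| \ll p^{n^2 - (n+1)/2}$.

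Combining the local bounds, with $e = n$ in the monomial case and $e = (n+1)/2$ in general,
\[
|S_d(L)| \ll \prod_{p \mid D} p^{n^2} \cdot \prod_{\substack{p \mid d \\ p \nmid D}} p^{n^2 - e} = D^{n^2} (d/D)^{n^2 - e} = d^{n^2} (D/d)^{e}.
\]
The implicit constants depend only on $n$, so their product over the $\omega(d)$ primes dividing $d$ is at most $C(n)^{\omega(d)}$, which by the standard bound $\omega(d) = O(\log d/\log\log d)$ (in the same spirit as~\eqref{eq:tau}) is absorbed into a factor $d^{o(1)}$, giving the claimed estimate.

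The argument is essentially bookkeeping once the prime-modulus bounds are in hand; the only subtlety is verifying that the row and column permutation trick in the monomial case actually brings the form into the shape required by Lemma~\ref{lem: SpL}, and that the accumulation of $\omega(d)$ implicit constants stays inside the $d^{o(1)}$ slack.
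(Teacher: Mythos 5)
Your proposal is correct and follows essentially the same route as the paper: decompose via Lemma~\ref{lem: Sd2 Sp2}, bound the factors with $p\mid D$ trivially by $p^{n^2}$, and apply Lemma~\ref{lem: SpL} (monomial case) or Lemma~\ref{lem: SpL1} (otherwise) for $p\mid d/D$, absorbing the accumulated constants into $d^{o(1)}$. Your explicit remark that a column permutation of $\vX$ preserves the singular variety and brings a monomial into the first-column shape required by Lemma~\ref{lem: SpL} is a small justification the paper leaves implicit, and it is valid.
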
 

\begin{proof} 
By Lemma~\ref{lem: Sd2 Sp2},  we have 
\begin{equation}
\label{eq:S S1S2}
S_{d}(L)=  S_1 S_2,
\end{equation}
where
\[
S_1 =  \prod_{p\mid d/D} S_{p}(b_{d,p}L) \mand  S_2 =  \prod_{p\mid  D} S_{p}(b_{d,p}L).
\]
Note that if $p\mid  d/D$ then $\gcd(L, p)=1$. 
In this case, if $L$ is a monomial  (or in fact depends only  on one column), then we apply Lemma~\ref{lem: SpL}, 
and the bound on the divisor function~\eqref{eq:tau},
to obtain
\begin{equation}
\label{eq:Bound S1 mon}
|S_1|  \le (d/D)^{n^2-n +o(1)}.
 \end{equation}
If $L$ has at least two nonzero terms, then we apply Lemma~\ref{lem: SpL1} to obtain
\begin{equation}
\label{eq:Bound S1 non-mon}
|S_1|  \le (d/D)^{n^2-(n+1)/2 +o(1)}.
 \end{equation}
 
 Next, for $S_2$, since $p$ divides all coefficients of $L$, the form $L$ vanishes modulo $p$, and thus 
\begin{equation}
\label{eq:Bound S2 trivial}
  |S_2| =  \prod_{p\mid  D}  p^{n^2} = D^{n^2}.
 \end{equation}

  Substituting the estimates~\eqref{eq:Bound S1 mon},~\eqref{eq:Bound S1 non-mon} and~\eqref{eq:Bound S2 trivial} 
  in~\eqref{eq:S S1S2}, we derive  the result. 
 \end{proof}

We proceed, following similar line of proof as above, to give a bound for $S_{d^2}(L)$. 

\begin{lemma}
\label{lem: Sd2 bound}
For any square-free integer $d\ge 1$ and a linear form $ L(\vX)  \in \Z[\vX]$, 
we have
\[
|S_{d^2}(L)| \le  d^{2n^2-(n+3)/2+o(1)} ef^{(n+3)/2}   
\]
for  some positive square-free integers $e$ and $f$ with $\gcd(e,f)=1$, which are defined 
by  $ \gcd(L, d^2)=  e f^2$. 
\end{lemma}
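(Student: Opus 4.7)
The plan is to follow the Chinese-Remainder decomposition strategy underlying Lemma~\ref{lem: Sd bound}. By Lemma~\ref{lem: Sd2 Sp2},
\[
|S_{d^2}(L)|=\prod_{p\mid d}|S_{p^2}(b_{d,p}L)|,
\]
and for each $p\mid d$ the valuation of $\gcd(L,d^2)$ at $p$ is $0$, $1$, or $2$, corresponding respectively to $p\nmid ef$, $p\mid e$, or $p\mid f$. I bound each local factor in these three cases and multiply, absorbing implied constants via the divisor bound~\eqref{eq:tau}.

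If $p\nmid ef$ then $b_{d,p}L$ is nonvanishing modulo $p$ and Lemma~\ref{lem: Sp2L} gives $|S_{p^2}(b_{d,p}L)|\ll p^{2n^2-(n+3)/2}$. If $p\mid f$ then $b_{d,p}L\equiv 0 \bmod {p^2}$, the character is identically $1$, and the trivial bound $|S_{p^2}(b_{d,p}L)|\le p^{2n^2}$ applies, producing the factor $f^{(n+3)/2}$ after combination with the base bound $d^{2n^2-(n+3)/2}$.

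The main obstacle is the case $p\mid e$. Writing $b_{d,p}L=pL'$ with $L'$ nonvanishing modulo $p$, the character simplifies to $\epp(pL'(\vX))=\ep(L'(\vX \bmod p))$, depending only on $\vX_0:=\vX\bmod p$. Parametrising $\vX=\vX_0+p\vY$ and using the first-order expansion
\[
\det(\vX_0+p\vY)\equiv\det\vX_0+p\sum_{i,j}C_{ij}(\vX_0)y_{ij}\bmod {p^2},
\]
where $C_{ij}$ is the $(i,j)$-cofactor, I count the lifts $\vY\in\Z_p^{n\times n}$ making $\det\vX\equiv 0\bmod {p^2}$: none when $\det\vX_0\not\equiv 0\bmod p$; exactly $p^{n^2-1}$ when $\rank_p\vX_0=n-1$, since some $(n-1)$-minor is nonzero modulo $p$ so the first-order term gives a nontrivial linear congruence on $\vY$; and exactly $p^{n^2}$ when $\rank_p\vX_0\le n-2$, since all $(n-1)$-minors then vanish modulo $p$ and, by Smith normal form, the integer lift of $\vX_0$ with entries in $\{0,\ldots,p-1\}$ already has $\det\vX_0\equiv 0\bmod {p^{n-\rank_p\vX_0}}$ with $n-\rank_p\vX_0\ge 2$. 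Consequently,
\[
S_{p^2}(pL')=p^{n^2-1}S_p(L')+p^{n^2-1}(p-1)\Sigma_{\le n-2},
\]
where $\Sigma_{\le n-2}=\sum_{\rank_p\vX_0\le n-2}\ep(L'(\vX_0))$. Lemma~\ref{lem: SpL1} bounds $p^{n^2-1}|S_p(L')|\ll p^{2n^2-(n+3)/2}$. For $\Sigma_{\le n-2}$, decompose $\vX_0=(\vx_0\mid\vY_0)$ so that the linear part $\ell'$ of $L'$ in $\vx_0$ is nontrivial modulo $p$; when $\rank_p\vY_0\le n-3$ the column $\vx_0$ is unconstrained and $\sum_{\vx_0\in\F_p^n}\ep(\ell'(\vx_0))=0$, leaving only contributions from $\rank_p\vY_0=n-2$ with $\vx_0$ in the column span of $\vY_0$, whose trivial count gives $|\Sigma_{\le n-2}|\ll p^{n^2-n-2}$. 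Combining yields $|S_{p^2}(b_{d,p}L)|\ll p^{2n^2-(n+3)/2}$ for $p\mid e$, even stronger than the target $p^{2n^2-(n+1)/2}$ that would produce the $e$ factor in the final bound.

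Multiplying the per-prime bounds and invoking~\eqref{eq:tau} to absorb divisor-function factors completes the proof. The hardest step is the refined control of $\Sigma_{\le n-2}$: the naive count $O(p^{n^2-4})$ of the rank-$\le n-2$ locus is insufficient for large $n$, which forces the column-plus-block decomposition that exploits the vanishing of $\ell'$ averaged over an unconstrained column to reduce to the much smaller rank-exactly-$(n-2)$ stratum.
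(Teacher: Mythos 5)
Your skeleton coincides with the paper's: the factorisation of Lemma~\ref{lem: Sd2 Sp2} into the three classes of primes $p\nmid ef$, $p\mid e$, $p\mid f$, Lemma~\ref{lem: Sp2L} for the first class, the trivial bound $p^{2n^2}$ for the third, and~\eqref{eq:tau} to absorb the per-prime constants. The difference is at the primes $p\mid e$: the paper simply reduces $S_{p^2}(b_{d,p}L)$ to $p^{n^2}\abs{S_p(b_{d,p}p^{-1}L)}$ and applies Lemma~\ref{lem: SpL1}, aiming only at $p^{2n^2-(n+1)/2}$ per prime, which is precisely what creates the factor $e$ in the statement. Your lifting analysis, with the exact identity $S_{p^2}(pL')=p^{n^2-1}S_p(L')+p^{n^2-1}(p-1)\Sigma_{\le n-2}$ (correctly distinguishing the rank-$(n-1)$ lifts from the rank $\le n-2$ stratum, where every lift works), is sound and indeed more careful than the paper at this point, and the bound $p^{n^2-1}\abs{S_p(L')}\ll p^{2n^2-(n+3)/2}$ via Lemma~\ref{lem: SpL1} is fine.

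The gap is in your estimate of $\Sigma_{\le n-2}$. The ``trivial count'' of the surviving stratum $\{(\vx_0,\vY_0):\ \rank_p\vY_0=n-2,\ \vx_0\in\Span(\vY_0)\}$ is not $O(p^{n^2-n-2})$: there are $\asymp p^{n^2-n-2}$ matrices $\vY_0\in\F_p^{n\times(n-1)}$ of rank $n-2$, but each admits $p^{n-2}$ vectors $\vx_0$ in its column span, so the stratum has $\asymp p^{n^2-4}$ points (consistent with the codimension-$4$ rank $\le n-2$ locus); it appears you counted only the $\vY_0$'s. With the correct trivial bound $\abs{\Sigma_{\le n-2}}\ll p^{n^2-4}$ your second term is only $O(p^{2n^2-4})$, which beats your target $p^{2n^2-(n+3)/2}$ only for $n\le 5$, and the weaker target $p^{2n^2-(n+1)/2}$ (all the lemma needs) only for $n\le 7$; for large $n$ the argument as written does not close. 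Your claimed bound $\abs{\Sigma_{\le n-2}}\ll p^{n^2-n-2}$ is nevertheless true, but it needs one further cancellation that you did not state: for fixed $\vY_0$ of rank $n-2$, the inner sum $\sum_{\vx_0\in\Span(\vY_0)}\ep\(\ell'(\vx_0)\)$ is $0$ unless $\Span(\vY_0)\subseteq\ker\ell'$, and the number of rank-$(n-2)$ matrices with all columns in this fixed hyperplane is $O(p^{n^2-2n})$, giving $O(p^{n^2-2n})\cdot p^{n-2}=O(p^{n^2-n-2})$. With that observation inserted your proof is complete (and in fact gives the lemma without the factor $e$); alternatively, for $p\mid e$ you could fall back on the paper's cruder reduction, which only requires the bound $p^{2n^2-(n+1)/2}$ coming from Lemma~\ref{lem: SpL1}.
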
 

\begin{proof} 
As in the proof of Lemma~\ref{lem: Sd bound}, applying Lemma~\ref{lem: Sd2 Sp2},  we have 
\begin{equation}
\label{eq:S S1S2S3}
S_{d^2}(L)=  S_1 S_2 S_3,
\end{equation}
where
\[
S_1 =  \prod_{p\mid d/(ef)} S_{p^2}(b_{d,p}L),\quad S_2 =  \prod_{p\mid  e} S_{p^2}(b_{d,p}L),\quad
S_3 =  \prod_{p\mid f} S_{p^2}(b_{d,p}L).
\]
Note that if $p\mid  d/(ef)$ then $\gcd(L, p)=1$. 
Hence, 
by Lemma~\ref{lem: Sp2L} and the bound on the divisor function~\eqref{eq:tau},  we obtain
\begin{equation}
\label{eq:Bound S1}
 |S_1|  \le (d/ef)^{2n^2-(n+3)/2+o(1)}.
 \end{equation}
 
 Next, for $S_2$ we again use~\eqref{eq:tau} and Lemma~\ref{lem: SpL1}, to derive 
\begin{equation}
\label{eq:Bound S2}
  |S_2| =  \prod_{p\mid  e} \(p^{n^2} \left|S_{p}(b_{d,p}p^{-1} L)\right| \)
  \le  e^{2n^2-(n+1)/2+o(1)}.
 \end{equation}
 
 Finally,  for  $S_3$ we obviously have
 \begin{equation}
\label{eq:Bound S3}
  S_3 =  f^{2n^2}.
   \end{equation}
  Substituting the estimates~\eqref{eq:Bound S1}, \eqref{eq:Bound S2} 
  and~\eqref{eq:Bound S3} in~\eqref{eq:S S1S2S3}, we derive  the result. 
 \end{proof}

 We use Lemma~\ref{lem: Sd2 bound} in the following simplified form
 using the trivial bound  
$ ef^{(n+3)/2} \le (ef^2)^{(n+3)/4}$.

\begin{cor}
\label{cor: Sd2 bound}
For any square-free integer $d\ge 1$ and a linear form $ L(\vX)  \in \Z[\vX]$, 
we have
\[
|S_{d^2}(L)| \le d^{2n^2+o(1)}(D/d^2)^{(n+3)/4},
\]
where   $D =  \gcd(L, d^2)$.
\end{cor}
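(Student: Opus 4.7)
The plan is to derive this corollary as a direct simplification of Lemma~\ref{lem: Sd2 bound}, exactly as the parenthetical remark in the text indicates. That lemma already packages all of the nontrivial input: bounds for $S_{p^2}(L)$ when $L$ does not vanish modulo $p$, bounds for $S_{p^2}(L)$ when $L$ vanishes modulo $p$ but not modulo $p^2$, and the trivial evaluation when $L$ vanishes modulo $p^2$. These are combined through the factorisation $D = ef^2$ with $e, f$ coprime and square-free. The only task remaining is to repackage the factor $ef^{(n+3)/2}$ in terms of $D$ alone, which will erase the distinction between $e$ and $f$ at the cost of a slightly weaker exponent.

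First I would invoke Lemma~\ref{lem: Sd2 bound} to obtain
\[
|S_{d^2}(L)| \le d^{2n^2-(n+3)/2+o(1)}\, e f^{(n+3)/2},
\]
with $D = ef^2$, $\gcd(e,f)=1$, and $e,f$ square-free. Next I would estimate
\[
e f^{(n+3)/2} \le e^{(n+3)/4} f^{(n+3)/2} = (ef^2)^{(n+3)/4} = D^{(n+3)/4},
\]
where the inequality $e \le e^{(n+3)/4}$ uses $e \ge 1$ and $(n+3)/4 \ge 1$ for $n \ge 1$ (the case $n=0$ being vacuous). Substituting gives
\[
|S_{d^2}(L)| \le d^{2n^2-(n+3)/2+o(1)} D^{(n+3)/4}.
\]

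Finally I would regroup the powers of $d$ and $D$ to produce the stated form:
\[
d^{2n^2-(n+3)/2} D^{(n+3)/4} = d^{2n^2}\,\bigl(D/d^2\bigr)^{(n+3)/4},
\]
so that the bound becomes $|S_{d^2}(L)| \le d^{2n^2+o(1)}(D/d^2)^{(n+3)/4}$, as claimed. There is no real obstacle here; the only point that requires a moment's care is checking that the trivial bound $e \le e^{(n+3)/4}$ is indeed valid in the intended range of $n$, which it is for every $n \ge 1$.
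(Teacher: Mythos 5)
Your proof is correct and is exactly the paper's argument: the corollary is obtained from Lemma~\ref{lem: Sd2 bound} via the trivial bound $ef^{(n+3)/2}\le(ef^2)^{(n+3)/4}=D^{(n+3)/4}$ (valid since $e\ge1$ and $(n+3)/4\ge1$), followed by the regrouping $d^{2n^2-(n+3)/2}D^{(n+3)/4}=d^{2n^2}(D/d^2)^{(n+3)/4}$.
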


 \section{Distribution of matrices}

\subsection{Discrepancy and the Koksma-Sz\"usz inequality}
\label{sec:discrepancy} 

We recall that the  {\it discrepancy\/}  $D(\Gamma)$ 
of  a sequence $\Gamma=(\bgamma_k)_{k= 1}^K$ of $K$ points 
in the half-open $s$-dimensional unit cube $[0,1)^s$, 
 is defined by
\[
 D(\Gamma) =\sup_{\cB \subseteq[0,1]^s}\left|\frac{A(\cB)}{K}- ( \beta_1-\alpha_1)   \cdots(\beta_s-\alpha_s)\right|,
\]
where the supremum is taken over all boxes  
\[
\cB = [\alpha_1, \beta_1] \times \ldots  \times [\alpha_s, \beta_s]\subseteq  [0,1)^s,
\]
and where  $A(\cB)$ is the number of elements of $\Gamma$, which fall in $\cB$, 
and 
\[ 
\vol \cB = ( \beta_1-\alpha_1)   \cdots(\beta_s-\alpha_s)
\] 
is the volume of $\cB$. 

One of the basic tools  
in the uniformity of
distribution theory is the celebrated
{\it Koksma--Sz\"usz inequality\/}~\cite{Kok,Sz} 
(see also~\cite[Theorem~1.21]{DrTi}),
which links the discrepancy of a sequence of points to certain
exponential sums.

\begin{lemma}\label{lem:K-S}
For any integer $M \ge 1$, we have
\[
 D(\Gamma) \ll \frac{1}{M}
+\frac{1}{K}\sum_{0<\|\vm\|\le M}\frac{1}{r(\vm)}
\left| \sum_{k=1}^K \exp\(2\pi i \langle \vm, \bgamma_k\rangle\)\right|,
\]
where $ \langle \vm, \bgamma_k\rangle$ denotes the inner product, 
\[
\|\vm\|=  \max_{j=1, \ldots, s}|m_j|, \qquad
r(\vm) =  \prod_{j=1}^s \max\left\{|m_j|,1\right\},
\] 
and the sum is taken over all vectors
$\vm= (m_1,\ldots,m_s)\in\Z^s$ with $0<\|\vm\|\le M$, and the implied 
constant depends only on $s$. 
\end{lemma}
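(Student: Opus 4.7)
The plan is to prove this classical inequality by constructing trigonometric polynomial majorants and minorants of the indicator function of each box $\cB$ and comparing their averages over $\Gamma$ with their integrals. I would follow the strategy outlined in~\cite{DrTi}.

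\textbf{Step 1: One-dimensional approximants.} For each interval $[\alpha_j, \beta_j] \subseteq [0,1)$, I would invoke Vaaler's theorem (or an equivalent Selberg-type construction) to produce $1$-periodic trigonometric polynomials $F_j^{\pm}$ of degree at most $M$ satisfying $F_j^{-}(x) \le \mathbf{1}_{[\alpha_j,\beta_j]}(x) \le F_j^{+}(x)$ on $[0,1)$, with $\int_0^1 F_j^{\pm}(x)\,dx = (\beta_j - \alpha_j) + O(1/M)$ and whose Fourier coefficients $\hat F_j^{\pm}(m)$ have absolute value at most $C/\max(|m|,1)$ for some absolute constant $C$. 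The decay bound on the Fourier coefficients is the crucial feature, not available for naive trapezoidal smoothings.

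\textbf{Step 2: Tensor product.} Set $F^{\pm}(\vx) = \prod_{j=1}^s F_j^{\pm}(x_j)$. These are $s$-variate trigonometric polynomials of coordinate-degree at most $M$ that sandwich the indicator of $\cB$, namely $F^{-}(\vx)\le\mathbf{1}_\cB(\vx)\le F^{+}(\vx)$. Expanding as
\[
F^{\pm}(\vx) = \sum_{\|\vm\| \le M} \hat a^{\pm}_{\vm} \exp\(2\pi i \langle \vm, \vx\rangle\),
\]
the product structure gives $|\hat a^{\pm}_{\vm}| \ll 1/r(\vm)$ for $\vm \ne \mathbf{0}$, and the constant term equals $\hat a^{\pm}_{\mathbf{0}} = \vol \cB + O(1/M)$, since the product of the one-dimensional integrals accumulates the $1/M$ errors at most linearly in the fixed dimension $s$.

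\textbf{Step 3: Evaluate on $\Gamma$.} Evaluating the sandwich inequality at each $\bgamma_k$, summing over $k$, and dividing by $K$ gives
\[
\frac{1}{K}\sum_{k=1}^K F^{-}(\bgamma_k) \le \frac{A(\cB)}{K} \le  \frac{1}{K}\sum_{k=1}^K F^{+}(\bgamma_k).
\]
Substituting the Fourier expansions, isolating the constant term, and applying the triangle inequality together with the bound on $\hat a^{\pm}_{\vm}$ yields
\[
\left|\frac{A(\cB)}{K} - \vol \cB\right| \ll \frac{1}{M} + \sum_{0<\|\vm\|\le M} \frac{1}{r(\vm)} \left|\frac{1}{K}\sum_{k=1}^K \exp\(2\pi i \langle \vm, \bgamma_k\rangle\)\right|.
\]
Taking the supremum over $\cB$ gives the claimed inequality.

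The main obstacle is Step~1: producing polynomial majorants/minorants that simultaneously achieve $L^1$-error $O(1/M)$ and Fourier decay $O(1/|m|)$ is exactly the content of Vaaler's theorem, and the product construction in Step~2 would fail with any cruder one-dimensional approximation. Steps~2 and~3 are essentially bookkeeping once Step~1 is granted.
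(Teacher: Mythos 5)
The paper offers no proof of this lemma: it is the classical Koksma--Sz\"usz (Erd\H{o}s--Tur\'an--Koksma) inequality, quoted from \cite{Kok,Sz} and \cite[Theorem~1.21]{DrTi}. So your argument must stand on its own, and as written it has a genuine gap in Step~2.

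The majorant half is fine: each Selberg/Vaaler majorant $F_j^{+}$ is nonnegative, so $F^{+}=\prod_j F_j^{+}\ge \prod_j \mathbf{1}_{[\alpha_j,\beta_j]}=\mathbf{1}_\cB$, and this yields the upper bound for $A(\cB)/K-\vol\cB$. The problem is the claim that $F^{-}=\prod_j F_j^{-}$ minorizes $\mathbf{1}_\cB$. A trigonometric polynomial lying below an interval indicator and not identically zero must be strictly negative somewhere off the interval (a nonnegative minorant would vanish on a set of positive measure, hence identically). Consequently, for $s\ge 2$, at a point where two coordinates lie outside their respective intervals the product $\prod_j F_j^{-}(x_j)$ can be strictly positive while $\mathbf{1}_\cB(\vx)=0$, so $F^{-}\le\mathbf{1}_\cB$ fails, and with it the lower half of your sandwich, i.e.\ half of the discrepancy bound. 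The standard repair is a telescoping correction: with $a_j=\mathbf{1}_{[\alpha_j,\beta_j]}(x_j)$ and $b_j=F_j^{+}(x_j)$ one has $0\le a_j\le b_j$, $b_j-a_j\le F_j^{+}(x_j)-F_j^{-}(x_j)$, and
\[
\prod_{j=1}^s b_j-\prod_{j=1}^s a_j=\sum_{j=1}^s (b_j-a_j)\prod_{i<j}a_i\prod_{i>j}b_i\le \sum_{j=1}^s \(F_j^{+}(x_j)-F_j^{-}(x_j)\)\prod_{i\ne j}F_i^{+}(x_i),
\]
so that $G^{-}=\prod_j F_j^{+}-\sum_j \(F_j^{+}-F_j^{-}\)\prod_{i\ne j}F_i^{+}$ is a genuine trigonometric-polynomial minorant of $\mathbf{1}_\cB$ of coordinate degree at most $M$. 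Since $\int_0^1\(F_j^{+}-F_j^{-}\)dx\ll 1/M$ and the one-dimensional Fourier coefficients of all factors decay like $1/\max\{|m|,1\}$, the constant term of $G^{-}$ is still $\vol\cB+O(1/M)$ and its nonzero coefficients are still $\ll 1/r(\vm)$ (there are only $s+1\ll 1$ summands), after which your Step~3 goes through verbatim; alternatively one can induct on the dimension $s$. With this correction your proof is the standard one for the cited result.
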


\subsection{Matrices with  determinants with a divisibility condition in boxes} 
\label{sec:boxes} 
We introduce the following extension of $N_n(m)$ from Section~\ref{sec:div det}.

For $H \ge 1$, we define 
\[
N_n(m; H) = \#\{A  \in \cM_n\(\Z;H\):~\det A \equiv 0 \bmod m\}. 
\]

We combine the bounds of exponential sums from Lemma~\ref{lem: Sd bound} and Corollary~\ref{cor: Sd2 bound}, respectively, 
with Lemma~\ref{lem:K-S} to derive asymptotic formulas for $N_n(m; H)$ in the cases when $m=d$ and $m = d^2$ 
for a square-free integer $d\ge 1$. 

\begin{lemma}
\label{lem: NdH}
For any square-free integer $d\ge 1$ and  any integer  $H \ge 1$ 
\[
N_n(d; H)  = N_n(d) \frac{(2H+1)^{n^2}} {d^{n^2}} 
+ O\(d^{n^2-2+1/n + o(1)}\).
\]
\end{lemma}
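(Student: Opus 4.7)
The plan is to combine the Koksma-Sz\"usz inequality (Lemma~\ref{lem:K-S}) with the exponential-sum bound of Lemma~\ref{lem: Sd bound}. Form the sequence $\Gamma$ of $K = N_n(d)$ points $B/d \in [0,1)^{n^2}$, with $B$ ranging over $\Z_d^{n\times n}$ satisfying $\det B \equiv 0 \bmod d$. Writing each singular residue class $B$'s contribution to $N_n(d;H)$ as $\prod_{i,j} f_H(B_{ij})$, where $f_H(b) = \#\{a \in [-H,H]\cap\Z : a\equiv b \bmod d\}$, and expanding each $f_H$ via its discrete Fourier series on $\Z_d$, one obtains
\[
N_n(d; H) = \frac{1}{d^{n^2}}\sum_{L \in \Z_d^{n\times n}}W(L)\,\overline{S_d(L)}
\]
with $W(L) = \prod_{i,j}\sum_{a=-H}^{H}\e_d(\ell_{ij}a)$. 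The contribution of $L = 0$ gives the stated main term $(2H+1)^{n^2}N_n(d)/d^{n^2}$, and the task reduces to bounding the tail $L \ne 0$; equivalently, to estimating the discrepancy $D(\Gamma)$.

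Lemma~\ref{lem:K-S} provides, for any $M\ge 1$,
\[
D(\Gamma) \ll \frac{1}{M} + \frac{1}{K}\sum_{0<\|\vm\|\le M}\frac{|S_d(L_\vm)|}{r(\vm)}.
\]
I would bound the inner sum by applying Lemma~\ref{lem: Sd bound} to each $|S_d(L_\vm)|$, splitting the $\vm$ according to $D = \gcd(L_\vm, d)$ and according to whether $L_\vm$ is a monomial. For non-monomial $L_\vm$ the bound is $d^{n^2+o(1)}(D/d)^{(n+1)/2}$, while for monomial $L_\vm$ it improves to $d^{n^2+o(1)}(D/d)^n$. Using the standard estimate $\sum_{0<\|\vm\|\le M}r(\vm)^{-1}\ll (\log M)^{n^2}$ to deal with the weighting, then summing geometrically over divisors $D\mid d$ and optimizing $M$ in the resulting trade-off with the term $1/M$, produces the desired power-saving exponent $n^2-2+1/n + o(1)$.

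The principal obstacle lies in controlling the contribution of monomial linear forms. If one only used Lemma~\ref{lem: SpL1} uniformly, these single-entry forms would dominate and yield a substantially weaker exponent; it is the sharper Lemma~\ref{lem: SpL}, improving $(n+1)/2$ to $n$ in the exponent of $D/d$, that makes the monomial case subdominant. The additional refinement $1/n$ beyond the ``trivial'' exponent $n^2 - 2$ comes precisely from balancing these two regimes against the truncation parameter in Koksma-Sz\"usz; the remaining work is a careful bookkeeping over the divisor structure of $d$ and the support patterns of $L_\vm$.
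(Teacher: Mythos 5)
Your overall strategy coincides with the paper's: apply the Koksma--Sz\"usz inequality with $K=N_n(d)$ to the points $\vX/d$ with $\det \vX\equiv 0\bmod d$, estimate the resulting sums by Lemma~\ref{lem: Sd bound} after splitting according to $D=\gcd(L,d)$ and according to whether $L$ is a monomial (and you correctly single out the monomial class as the one that forces the sharper Lemma~\ref{lem: SpL} rather than Lemma~\ref{lem: SpL1}), and then optimise $M$. The genuine gap is in the sentence where you declare bounding the tail of your exact Fourier identity to be ``equivalently'' estimating the discrepancy: it is not. In the exact expansion the weight of a form $L$ is $W(L)=\prod_{i,j}\sum_{|a|\le H}\e_d(\ell_{ij}a)$, which carries a factor $2H+1$ for every \emph{vanishing} coefficient of $L$, whereas in Lemma~\ref{lem:K-S} the weights are $1/r(\vm)\le 1$ and the frequencies are truncated at $\|\vm\|\le M$. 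Both differences matter. If you bound the tail of the exact expansion termwise via Lemma~\ref{lem: Sd bound}, then for $H$ much larger than $d$ the monomial terms alone are of order $(2H+1)^{n^2-1}$ (and these secondary terms are genuinely of that size, not an artifact of taking absolute values), so they cannot be absorbed into an $H$-independent error $d^{n^2-2+1/n+o(1)}$; and even for $H\le d$ the untruncated sum contains frequencies whose $\gcd$ with $d$ is a large divisor --- already the single monomial with coefficient $d/2$, for even square-free $d$, contributes about $(2H+1)^{n^2-1}d^{o(1)}\approx d^{n^2-1+o(1)}$ under the bound of Lemma~\ref{lem: Sd bound}, and there is then no parameter $M$ left to optimise. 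The paper's proof avoids both issues by first reducing to boxes all of whose side lengths are at most $d$ (splitting each interval into complete residue blocks plus a leftover interval) and only then invoking Lemma~\ref{lem:K-S}, so that the truncation $D\le M$ with $M=d^{(n-1)/n}<d$ is in force; your sketch omits this reduction, and it is precisely where the uniformity in $H$ has to be fought for (it is a delicate point even in the paper's own treatment).

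A smaller but still essential point of bookkeeping: the weight sums must be evaluated \emph{within} each class $\gcd(L,d)=D$. Since every nonzero coefficient of such an $L$ is a nonzero multiple of $D$, one has $\sum 1/\rho(L)\ll D^{-1}\log d$ over monomials and $\ll D^{-2}(\log d)^{n^2}$ over non-monomials in the class, and it is these gains that turn the divisor sums into $\sum_{D\le M}D^{n-1}\ll M^{n-1}d^{o(1)}$ and $\sum_{D\le M}D^{(n-3)/2}\ll (M^{(n-3)/2}+1)d^{o(1)}$, producing the exponent $n^2-2+1/n$ after choosing $M=d^{(n-1)/n}$. Using only the global estimate $\sum_{0<\|\vm\|\le M}r(\vm)^{-1}\ll(\log M)^{n^2}$, as your sketch states, the sums over $D$ are dominated by $D\asymp M$ (they are not ``geometric''), and already the monomial class then caps the method at an error of order $d^{n^2-2+2/(n+1)+o(1)}$, which is weaker than the claimed $d^{n^2-2+1/n+o(1)}$.
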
 
\begin{proof} Let $m\ge 1$ be an integer and let $\vH = \(H_{ij}\)_{i,j=1}^n$ with some positive integers $H_{ij} < m/2$  (below we set $m=d$). 
We extend the definition of $N_n(m; H)$ to $N_n(m; \vH)$
which is the number of $n \times n$ matrices with
\[
A = \(a_{ij}\)_{i,j=1}^n \in \cM_n\(\Z\), \qquad  |a_{ij}| \le  H_{ij}  ,\  i,j =1, \dots, n, 
\]
and such that  $\det A \equiv 0 \bmod m$.  

Clearly, to establish the desired result, it is enough to show that 
 \begin{equation}
\label{eq:NH d}
N_n(d; \vH)  = N_n(d) \frac{1} {d^{n^2}} \prod_{i,j=1}^n\(2H_{ij}+1\) + O\(d^{n^2-2+1/n + o(1)}\).
\end{equation}
In fact we only need it when each $H_{ij}$ is either $d$ or $H$, but this does not 
simplify the argument.

To derive~\eqref{eq:NH}, we treat the problem of counting $N_n(d; \vH)$ as the discrepancy  
problem for the points $\(x_{ij}/d\)_{i,j=1}^n  \in (\R/\Z)^{n^2}$ with
\[ 
\det\(x_{ij}\)_{i,j=1}^n \equiv 0 \bmod {d}, \qquad x_{ij}\in \Z_{d}, \ i,j=1, \ldots, n, 
\]
embedded in the $n^2$-dimensional unit  torus $(\R/\Z)^{n^2}$.  In particular, our argument relies on 
Lemma~\ref{lem:K-S}, applied with $K = N_n(d)$ and with some $M \ge 1$, to be chosen later.

For a positive integer  $M \ge 1$, it is convenient to define  $\cL(M)$  as the set of all nonzero linear forms $L(\vX) \in \Z[\vX]$ 
of the form~\eqref{eq: LinForm} and with coefficients in the interval $[-M,M]$. 

Furthermore for $L$ as in~\eqref{eq: LinForm}, we denote
\[
\rho(L) =   \prod_{i,j=1}^n \max\left\{|a_{ij}|,1\right\}.
\]
 Hence, splitting each interval $[0,H_{ij})$ into intervals of length $d$, and applying Lemma~\ref{lem:K-S} to count points in incomplete boxes on the boundary, we have
\[
N_n(d; \vH)  = N_n(d) \frac{1} {d^{n^2}} \prod_{i,j=1}^n\(2H_{ij}+1\)+ O\(M^{-1} N_n(d)  + R\),
\]
where 
\[
R = \sum_{L \in  \cL(M)}  \frac{1}{\rho(L)} \abs{S_{d}(L)}.
\]
We note that Lemma~\ref{lem: Det = 0 Mod d^2} implies that 
 \begin{equation}
\label{eq:Nd}
N_n(d)  \le  d^{n^2} \prod_{p\mid d} \(p^{-1} + O\(p^{-2}\)\)  = d^{n^2-1 + o(1)}.
\end{equation}
Therefore, we have
 \begin{equation}
\label{eq:N and R d}
N_n(d; \vH)  =N_n(d) \frac{1} {d^{n^2}}  \prod_{i,j=1}^n\(2H_{ij}+1\)+ O\(d^{n^2-1 + o(1)}M^{-1}  + R\).
\end{equation}

Next for each positive divisor $D \mid d$  
we collect together the terms with $\gcd(L, d) = D$. 
Since $L \in \cL(M)$, we can assume that $D \le M$.
We also write 
 \begin{equation}
\label{eq: R Rm Rnm}
R = R_{\text{mon}} + R_{\text{non-mon}},
\end{equation}
where $R_{\text{mon}}$ and $R_{\text{non-mon}}$ are contributions to $R$ from  monomial and 
non-monomial linear forms $L$. 

Since $L \in \cL(M)$ is a nonzero linear form, the condition $\gcd(L, d) = D$ implies that 
each of its non-zero coefficients  is a non-zero multiple of $D$. 
It is now easy to see that 
\[
 \sum_{\substack{L \in \cL_{d}(M)\\
L~\text{monomial} \\\gcd(L, d) = D}}  \frac{1}{\rho(L)}  \ll  D^{-1} \log d  . 
 \]
By Lemma~\ref{lem: Sd bound}  and also using~\eqref{eq:tau},  this leads to the bound 
 \begin{equation}
\label{eq:  Rm}
R_{\text{mon}} = d^{n^2 -n+o(1)}  \sum_{\substack{D\mid d\\ D \le M}} D^{n} \sum_{\substack{L \in \cL_{d}(M)\\
L~\text{monomial} \\\gcd(L, d) = D}} \frac{1}{\rho(L)}  \le d^{n^2 -n+o(1)} M^{n-1} . 
 \end{equation}

 Furthermore, since non-monomial linear forms have at least two   non-zero coefficients, each of which  is a non-zero multiple of $D$,
 we see that 
 \[
 \sum_{\substack{L \in \cL_{d}(M)\\
L~\text{non-monomial} \\\gcd(L, d) = D}}  \frac{1}{\rho(L)}  \ll  D^{-2} (\log d)^{n^2} . 
 \]
 (the power of $\log d$ can easily  be reduced). Hence, by Lemma~\ref{lem: Sd bound} and~\eqref{eq:tau} again,
 \begin{equation}
\label{eq: Rnm}
\begin{split}
R_{\text{non-mon}} 
& = d^{n^2 -(n+1)/2+o(1)}  \sum_{\substack{D\mid d\\ D \le M}} D^{(n+1)/2} \sum_{\substack{L \in \cL_{d}(M)\\
L~\text{non-monomial} \\\gcd(L, d) = D}} \frac{1}{\rho(L)} \\
& \le d^{n^2 -(n+1)/2+o(1)} \(M^{(n-3)/2}+1\) . 
\end{split}
 \end{equation}
 
Substituting the bounds~\eqref{eq: Rm} and~\eqref{eq: Rnm} in~\eqref{eq: R Rm Rnm}
and the recalling~\eqref{eq:N and R d} we arrive to 
\[
N_n(d; \vH)  =N_n(d)  \frac{1} {d^{n^2}}  \prod_{i,j=1}^n\(2H_{ij}+1\)+ O\( d^{n^2+ o(1)} \sE \), 
\] 
where 
\[ 
\sE = d^{-1 } M^{-1}  + d^{ -n} M^{n-1}+
d^{ -(n+1)/2} M^{(n-3)/2}+ d^{ -(n+1)/2} .
\]
Comparing to potentially optimal values 
\[
M =d^{(n-1)/n} \mand M = d,
\]
we see that the first one leads to the stronger bound 
\begin{align*}
\sE& \ll  d^{-(2n-1)/n}+
d^{ -(5n-3)/(2n)}  + d^{ -(n+1)/2} \\
& \ll   d^{-(2n-1)/n}+d^{ -(n+1)/2} \ll  d^{-(2n-1)/n}. 
\end{align*}
Hence we now derive~\eqref{eq:NH d} and thus conclude the proof.  
\end{proof}

\begin{lemma}
\label{lem: Nd2H}
For any square-free integer $d\ge 1$ and  any integer  $H \ge 1$ 
\[
N_n(d^2; H)  = N_n(d^2) \frac{(2H+1)^{n^2}} {d^{2n^2}} 
+ O\(d^{2n^2-4(n+1)/(n+3) + o(1)}\).
\]
\end{lemma}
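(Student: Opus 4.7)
\textbf{Proof plan for Lemma~\ref{lem: Nd2H}.} The argument parallels that of Lemma~\ref{lem: NdH}, with Corollary~\ref{cor: Sd2 bound} playing the role of Lemma~\ref{lem: Sd bound} and with $d^2$ replacing $d$ as the modulus. The plan is to pass from the box counting problem for $N_n(d^2;H)$ to a discrepancy problem for the rational points
\[
\( x_{ij}/d^2\)_{i,j=1}^n \in [0,1)^{n^2},\qquad \det (x_{ij})\equiv 0 \bmod {d^2},\ x_{ij}\in \Z_{d^2},
\]
(of which there are $K = N_n(d^2)$), and then apply the Koksma--Sz\"usz inequality (Lemma~\ref{lem:K-S}) with an auxiliary parameter $M\ge 1$ to be chosen at the end.

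Exactly as in the proof of Lemma~\ref{lem: NdH}, subdividing each interval $[-H_{ij},H_{ij}]$ into blocks of length $d^2$ and handling the boundary by Lemma~\ref{lem:K-S}, I obtain
\[
N_n(d^2; \vH) = \frac{N_n(d^2)}{d^{2n^2}}\prod_{i,j=1}^n(2H_{ij}+1) + O\(M^{-1} N_n(d^2) + R\),
\]
where
\[
R = \sum_{L\in \cL(M)} \frac{1}{\rho(L)}\, |S_{d^2}(L)|,
\]
and only the case $H_{ij}\in\{d^2,H\}$ will be needed. Lemma~\ref{lem: Det = 0 Mod d^2} gives $N_n(d^2)\le d^{2n^2-2+o(1)}$, so the first error term is $d^{2n^2-2+o(1)}M^{-1}$.

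The core step is to bound $R$ using Corollary~\ref{cor: Sd2 bound}: for each divisor $D\mid d^2$, the contribution of forms $L\in \cL(M)$ with $\gcd(L,d^2)=D$ satisfies $|S_{d^2}(L)|\le d^{2n^2+o(1)}(D/d^2)^{(n+3)/4}$. Since every nonzero coefficient of such an $L$ is a nonzero multiple of $D$, we have $D\le M$, and I will group forms by $D$ and use
\[
\sum_{\substack{L\in \cL(M)\\ \gcd(L,d^2)=D}} \frac{1}{\rho(L)} \le d^{o(1)} D^{-1}
\]
(coming from the trivial estimate $\rho(L)\ge D$ for monomial forms and $\rho(L)\ge D^2$ for non-monomial ones, each weighted by a polylogarithmic count; the precise separation into monomial and non-monomial forms, as in~\eqref{eq:  Rm} and~\eqref{eq: Rnm}, is not needed here since Corollary~\ref{cor: Sd2 bound} does not distinguish them). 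This yields
\[
R \le d^{2n^2 -(n+3)/2 +o(1)} \sum_{\substack{D\mid d^2\\ D\le M}} D^{(n-1)/4} \le d^{2n^2-(n+3)/2+o(1)} M^{(n-1)/4}.
\]

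The final step is to balance the two error contributions
\[
\sE_1 = d^{-2}M^{-1}\mand \sE_2 = d^{-(n+3)/2}M^{(n-1)/4}.
\]
Setting $\sE_1=\sE_2$ gives $M= d^{2(n-1)/(n+3)}$, for which $\sE_1=\sE_2 = d^{-4(n+1)/(n+3)}$, producing the announced error $d^{2n^2 - 4(n+1)/(n+3)+o(1)}$.

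The main technical obstacle is keeping track of the gcd stratification by $D\mid d^2$: in contrast with the square-free modulus case, the divisors of $d^2$ are no longer square-free, so Corollary~\ref{cor: Sd2 bound} (rather than Lemma~\ref{lem: Sd bound}) must be invoked, and the weaker exponent $(n+3)/4$ is responsible for the concrete value $\vartheta_n = 1-1/(n^3+1)$ appearing later in Theorem~\ref{thm:Mob-Det}. Once this bookkeeping is done, the optimization over $M$ is routine and delivers the stated exponent $4(n+1)/(n+3)$.
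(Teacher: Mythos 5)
Your proposal matches the paper's proof essentially step for step: the same discrepancy reduction via Lemma~\ref{lem:K-S} with parameter $M$, the same stratification of the forms by $D=\gcd(L,d^2)$ combined with Corollary~\ref{cor: Sd2 bound} and the estimate $\sum_{\gcd(L,d^2)=D}1/\rho(L)\ll D^{-1}d^{o(1)}$, and the same optimisation $M=d^{2(n-1)/(n+3)}$ yielding the exponent $4(n+1)/(n+3)$, so it is correct. The only slip is your closing aside: this lemma is used for Theorem~\ref{thm:SF-Det} (the exponent $\gamma_n$), whereas $\vartheta_n$ in Theorem~\ref{thm:Mob-Det} comes from Lemma~\ref{lem: NdH} with modulus $d$.
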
 

\begin{proof} We follow the same path as in the proof of  Lemma~\ref{lem: NdH}. 
In particular, we continue to use the notations $\vH = \(H_{ij}\)_{i,j=1}^n$  and $N_n(m; \vH)$ (with $m=d^2$) and observe
that it is enough to show that 
 \begin{equation}
\label{eq:NH}
\begin{split}
N_n(d^2; \vH)  = N_n(d^2) \frac{1} {d^{2n^2}} \prod_{i,j=1}^n & \(2H_{ij}+1\)\\
& + O\(d^{2n^2-4(n+1)/(n+3) + o(1)}\)
\end{split}
\end{equation}
(this time we only need it when each $H_{ij}$ is either $d^2$ or $H$).

As in the proof of Lemma~\ref{lem: NdH},   with some $M \ge 1$, to be chosen later, by Lemma~\ref{lem:K-S} we have, 
\[
N_n(d^2; \vH)  = N_n(d^2) \frac{1} {d^{2n^2}} \prod_{i,j=1}^n\(2H_{ij}+1\)+ O\(M^{-1} N_n(d^2)  + R\),
\]
where 
\[
R = \sum_{L \in  \cL(M)}  \frac{1}{\rho(L)} \abs{S_{d^2}(L)}.
\]

This time, instead of~\eqref{eq:Nd},  Lemma~\ref{lem: Det = 0 Mod d^2} yields
 \begin{equation}
\label{eq:Nd2}
N_n(d^2)  \le  d^{2n^2} \prod_{p\mid d} \(p^{-2} + O\(p^{-3}\)\) \le  d^{2n^2-2 + o(1)}.
\end{equation}
Therefore, we have
 \begin{equation}
\label{eq:N and R}
N_n(d^2; \vH)  =N_n(d^2) \frac{1} {d^{2n^2}}  \prod_{i,j=1}^n\(2H_{ij}+1\)+ O\(M^{-1}  d^{2n^2-2 + o(1)} + R\).
\end{equation}
 
Next for each positive divisor $D \mid d^2$  
we collect together the terms with $\gcd(L, d^2) = D$. 
Since $L \in \cL(M)$, we can assume that $D \le M$.

Together with Corollary~\ref{cor: Sd2 bound} this leads to the bound 
\[
R = d^{2n^2 -(n+3)/2+o(1)}  \sum_{\substack{D\mid d^2\\ D \le M}} D^{(n+3)/4} \sum_{\substack{L \in \cL_{d^2}(M)\\\gcd(L, d^2) = D}} \frac{1}{\rho(L)} . 
\]
Since $L \in \cL(M)$ is a nonzero linear form, the condition $\gcd(L, d^2) = D$ implies that 
each of its non-zero coefficients  is a non-zero multiple of $D$. 
It is now easy to see that 
\[
 \sum_{\substack{L \in \cL(M)\\ \gcd(L, d^2) = D}} \frac{1}{\rho(L)}  \ll D^{-1} (\log d)^{n^2} .
 \]
Again, we note that one can certainly get a more precise bound here by classifying forms by the number $\nu \ge 1$
 of non-zero coefficients but this does not affect the final result. 
 
Hence, by~\eqref{eq:tau}, 
\[
R = d^{2n^2 -(n+3)/2+o(1)}    \sum_{\substack{D\mid d^2\\ D \le M}}  D^{(n-1)/4}   \le  M^{(n-1)/4} d^{2n^2 -(n+3)/2+o(1)} .
\]
Recalling~\eqref{eq:N and R}, we see that 
\begin{align*}
N_n(d^2; \vH)  &=  N_n(d^2) \frac{1} {d^{2n^2}}
 \prod_{i,j=1}^n\(2H_{ij}+1\)\\
&\qquad\qquad + O\(M^{-1}  d^{2n^2-2 + o(1)}
  +  M^{(n-1)/4} d^{2n^2 -(n+3)/2+o(1)} \).
\end{align*} 
Choosing $M = d^{2(n-1)/(n+3)}$, we derive~\eqref{eq:NH} and thus conclude the proof.  
\end{proof} 

 \section{Proof of Theorem~\ref{thm:SF-Det}}

 \subsection{Initial split}  Let $\mu(m)$ denote the {\it M{\"o}bius\/} function, that is, we have $\mu(m) = 0$ if 
 $m$ is not square-free, while otherwise $\mu(m) = (-1)^s$ where $s$ is the number of distinct 
 prime divisors of $m$.

 Then, by the inclusion-exclusion principle, we have 
 \[
S_n(H) =  \sum_{1 \le d \le \sqrt{n! H^n}} \mu(d) N_n(d^2; H) .
\]
In  the above sum we can certainly limit the range of  $d$ via the Hadamard inequality,  but this is 
not important for us. 

We now choose a parameter $\Delta$ and split the above sums as 
 \begin{equation}
\label{eq:S ME}
S_n(H)  = \sM  + \sE, 
\end{equation}
where
\[
\sM =   \sum_{1\le d \le \Delta} \mu(d) N_n(d^2;  H) \mand 
\sE =  \sum_{ \Delta< d \le \sqrt{n! H^n}} \mu(d) N_n(d^2;  H) .
\] 

 \subsection{Main term} 
 For the main term $\sM$ we use Lemma~\ref{lem: Nd2H} and write 
\begin{align*}
\sM & =   \sum_{1\le d \le \Delta}  \mu(d)  \(N_n(d^2) \frac{(2H+1)^{n^2}} {d^{2n^2}} 
+ O\(d^{2n^2-4(n+1)/(n+3) + o(1)}\)  \)\\
& = (2H+1)^{n^2}  \sum_{1\le d \le \Delta} N_n(d^2)   \frac{\mu(d) } {d^{2n^2}} 
+ O\(\Delta^{2n^2-(3n+1)/(n+3)+ o(1)}\)  .
\end{align*} 

Using the bound~\eqref{eq:Nd2}, and extending the summation in the above sum over all $d\ge 1$, we obtain 
\begin{align*}
\sM & =   (2H+1)^{n^2}  \sum_{d=1}^\infty  N_n(d^2)  \frac{\mu(d) } {d^{2n^2} }\\
&\qquad\qquad\qquad\qquad+ O\( H^{n^2}  \sum_{d >\Delta}  d^{-2 + o(1)}+  \Delta^{2n^2-(3n+1)/(n+3)+ o(1)}  \) \\
& =   (2H+1)^{n^2}  \sum_{d=1}^\infty  N_n(d^2)  \frac{\mu(d) } {d^{2n^2} } \\
&\qquad\qquad\qquad\qquad+  O\( H^{n^2} \Delta^{-1 + o(1)}+  \Delta^{2n^2-(3n+1)/(n+3)+ o(1)}  \) .
\end{align*}

 Using the muliplicativity of $\mu(d)$ and $N_n(d^2)/{d^{2n^2}}$, 
  and Lemma~\ref{lem: Det = 0 Mod d^2},  we derive 
\begin{align*}
  \sum_{d=1}^\infty  N_n(d^2)  \frac{\mu(d) } {d^{2n^2} } & =  \prod_{p} \(1 - N_n(p^2)/{p^{2n^2}}\) \\
   & = \prod_{p} \(1- \(1 - \prod_{j = 2}^{n+1} \(1-p^{-j}\)\)\)\\
 & =  \prod_{p}\,  \prod_{j = 2}^{n+1} \(1-p^{-j}\) =  \fS_n  . 
\end{align*}
Therefore,
 \begin{equation}
\label{eq:M-term}
\begin{split} 
\sM  = \fS_n & (2H+1)^{n^2}  \\
&+ O\( H^{n^2} \Delta^{-1 + o(1)}+  \Delta^{2n^2-(3n+1)/(n+3)+ o(1)}  \).
\end{split}
\end{equation}

 \subsection{Error term}
 \label{sec:err term SF}   To estimate the error term $\sE$, we write 
 \[
|\sE | \le   \sum_{ \Delta< d \le \sqrt{n! H^n}}   N_n(d^2; H),
\]
and notice that 
\[
N_n(d^2; H) \le \sum_{|a| \le n!H^n/d^2} \# \left\{A \in \cM_n\(\Z;H\):~\det A =ad^2\right\}.
\]
Hence, by Lemma~\ref{lem: Det} we have
\[
N_n(d^2; H) \le H^{n^2 - n + o(1)} \sum_{|a| \le n!H^n/d^2}1 \le H^{n^2 + o(1)}/d^2.
\]
We therefore derive 
 \begin{equation}
\label{eq:E-term}
|\sE | \le  H^{n^2 + o(1)}  \sum_{ \Delta< d \le \sqrt{n! H^n}} d^{-2} \le H^{n^2 + o(1)} \Delta^{-1}. 
\end{equation} 

 \subsection{Final optimisation} 
Substituting~\eqref{eq:M-term}  and~\eqref{eq:E-term} in~\eqref{eq:S ME}, we obtain 
\[
S_n(H)   = 2^{n^2} \fS_n H^{n^2}  + O\( H^{n^2} \Delta^{-1 + o(1)}+  \Delta^{2n^2-(3n+1)/(n+3)+ o(1)} + H^{n^2-1}  \),
\]
which after choosing 
\[
\Delta = H^{n^2(n+3)/(2n^2(n+3)-2n+2)}
\]
implies the desired result.

 \section{Proof of Theorem~\ref{thm:Mob-Det}}

 \subsection{Initial split}  
 We proceed similarly to the proof of Theorem~\ref{thm:SF-Det}. We recall the formula 
 \[
 \varphi(m) = m \sum_{d \mid m} \frac{\mu(d)}{d}, 
 \]
 where, as before,   $\mu(d)$ denotes the {\it M{\"o}bius\/} function,  see,
 for example,~\cite[Equation~(16.1.3) and Section~16.3]{HardyWright}. Therefore, we obtain
 \[
\varPhi_n(H) =  \sum_{1 \le d \le \sqrt{n! H^n}} \frac{\mu(d)}{d} N_n(d; H) .
\]
In  the above sum we can certainly limit the range of  $d$ via the Hadamard inequality,  but this is 
not important for us. 

This time instead of~\eqref{eq:S ME}, for some  parameter $\Delta$, we write
 \begin{equation}
\label{eq:F ME}
\varPhi_n(H)  = \sM  + \sE, 
\end{equation}
where
\[
\sM =   \sum_{1\le d \le \Delta} \frac{\mu(d)}{d} N_n(d; H)\mand 
\sE =  \sum_{ \Delta< d \le n! H^n} \frac{\mu(d)}{d} N_n(d; H).
\] 

 \subsection{Main term} 
 For the main term $\sM$ we use Lemma~\ref{lem: NdH} and write 
\begin{align*}
\sM & =   \sum_{1\le d \le \Delta}  \mu(d)  \(N_n(d) \frac{(2H+1)^{n^2}} {d^{n^2+1}} 
+ O\(d^{n^2-2 +1/n + o(1)}\)  \)\\
& = (2H+1)^{n^2}  \sum_{1\le d \le \Delta} N_n(d)   \frac{\mu(d) } {d^{n^2+1}} 
+ O\(\Delta^{n^2-1 +1/n  + o(1)}\)  .
\end{align*} 

Using the bound~\eqref{eq:Nd}, and extending the summation in the above sum over all $d\ge 1$, we obtain 
\begin{align*}
\sM & =  (2H+1)^{n^2}  \sum_{d=1}^\infty  N_n(d)   \frac{\mu(d) } {d^{n^2+1}}\\
&\qquad\qquad\qquad\qquad+ O\( H^{n^2}  \sum_{d >\Delta}  d^{-2 + o(1)}+  \Delta^{n^2-1 +1/n + o(1)}  \) \\
& =  (2H+1)^{n^2} \sum_{d=1}^\infty  N_n(d)  \frac{\mu(d) } {d^{n^2+1} } + O\( H^{n^2} \Delta^{-1 + o(1)}+  \Delta^{n^2-1 +1/n + o(1)}  \) .
\end{align*}

 Using the muliplicativity of $\mu(d)$ and $N_n(d)/{d^{n^2+1}}$, 
  and Lemma~\ref{lem: Det = 0 Mod d^2},  we derive 
\begin{align*}
  \sum_{d=1}^\infty  N_n(d)   \frac{\mu(d) } {d^{n^2+1}}& =  \prod_{p} \(1 - N_n(p)/{p^{n^2+1}}\) \\
   & = \prod_{p} \(1- \frac{1}{p}\(1 - \prod_{j = 1}^{n} \(1-p^{-j}\)\)\)\\
 & =  \prod_{p}\,  \(1- \frac{1}{p}\) \(1+  \frac{1}{p} \prod_{j = 2}^{n} \(1-p^{-j}\) \)=  \sigma_n. 
\end{align*}
Therefore,
 \begin{equation}
\label{eq:M-term Mob}
\sM  =  \sigma_n (2H+1)^{n^2} + O\( H^{n^2} \Delta^{-1 + o(1)}+  \Delta^{n^2-1 +1/n + o(1)}  \).
\end{equation}

 \subsection{Error term}  As in Section~\ref{sec:err term SF}, using that by Lemma~\ref{lem: Det}  we have
\[
N_n(d; H) \le H^{n^2 - n + o(1)} \sum_{|a| \le n!H^n/d}1 \le H^{n^2 + o(1)}/d, 
\]
we  derive 
 \begin{equation}
\label{eq:E-term Mob}
|\sE | \le  H^{n^2 + o(1)}  \sum_{ \Delta< d \le n! H^n} d^{-2} \le H^{n^2 + o(1)} \Delta^{-1}. 
\end{equation} 

 \subsection{Final optimisation} 
Substituting~\eqref{eq:M-term Mob}  and~\eqref{eq:E-term Mob} in~\eqref{eq:F ME}, we obtain 
\[
\varPhi_n(H)   = 2^{n^2} \sigma_n  H^{n^2}  + O\( H^{n^2} \Delta^{-1 + o(1)}+  \Delta^{n^2-1 +1/n + o(1)} + H^{n^2-1}  \),
\] 
which after choosing 
\[
\Delta = H^{n^3/(n^3+1)}
\]
implies the desired result.

\section*{Acknowledgement}

The authors would like to thank Alan Haynes for pointing out at a recurring oversight 
in the initial version of this paper  and also the referees for the very careful reading and 
helpful comments.

This work  was  supported, in part, by the Australian Research Council Grants  DP200100355 and DP230100530.  A.O. gratefully acknowledges the hospitality and support of the Max Planck Institute for Mathematics and Institut des Hautes {\'E}tudes Scientifiques, where her 
work has been carried out.

\end{document}